\documentclass[12pt]{amsart}

\usepackage{graphicx}
\usepackage{amsthm}
\usepackage{amssymb}

\newcommand{\R}{{\mathbb R}}

\theoremstyle{plain}
\numberwithin{equation}{section}
\newtheorem{thm}{Theorem}[section]
\newtheorem{theorem}[thm]{Theorem}
\newtheorem{lemma}[thm]{Lemma}

\newtheorem{prop}[thm]{Proposition}
\newtheorem{remark}[thm]{Remark}
\newtheorem{cor}[thm]{Corollary}

\newcommand{\eps}{\varepsilon}
\newcommand{\paren}[1]{\left(#1\right)}
\newcommand{\bracket}[1]{\left[#1\right]}

\newcommand{\inner}[1]{\left\langle#1\right\rangle}
\newcommand{\abs}[1]{\left\lvert#1\right\rvert}
\newcommand{\mb}{\mathbb}
\newcommand{\del}{\partial}
\newcommand{\pdif}[2]{\frac{\del#1}{\del#2}}

\begin{document}

\title[Isoperimetry in Surfaces of Revolution with Density]{Isoperimetry in Surfaces of Revolution with Density}

\thanks{This paper is a product of the work of the Williams College SMALL NSF REU 2017 Geometry Group.
We would like to thank our advisor Frank Morgan for his guidance on this project.
We would also like to thank the National Science Foundation; Williams College; Michigan State University; the University of Maryland, College Park; and the Massachusetts Institute of Technology.}
\author{Eliot Bongiovanni}
\address{Department of Mathematics\\
			Rice University}
\email{eliotbonge@gmail.com}
\author{Alejandro Diaz}
\address{Department of Mathematics\\
			The University of Maryland, College Park}
\email{adiaz126@terpmail.umd.edu}
\author{Arjun Kakkar}
\address{Department of Mathematics\\
                University of California, Los Angeles}
\email{arjunkakkar8@gmail.com}
\author{Nat Sothanaphan}
\address{Courant Institute of Mathematical Sciences\\
				New York University}
\email{natsothanaphan@gmail.com}

\begin{abstract}
The isoperimetric problem with a density or weighting seeks to enclose prescribed weighted volume with minimum weighted perimeter. According to Chambers' recent proof of the log-convex density conjecture, for many densities on $\mathbb{R}^n$ the answer is a sphere about the origin. We seek to generalize his results to some other spaces of revolution or to two different densities for volume and perimeter.
We provide general results on existence and boundedness and a new approach to proving circles about the origin isoperimetric.
\end{abstract}

\maketitle

\section{Introduction}
The log-convex density theorem proved by Gregory Chambers \cite{Ch}
asserts that on $\R^n$ with log-convex density, an isoperimetric surface is a sphere centered at the origin. We seek to generalize his results to some other spaces of revolution and to two different densities for volume and perimeter.

Our Theorems \ref{thm:exist} and \ref{thm:bounded} provide general results on existence and boundedness after Morgan and Pratelli \cite{MP}.
The existence proof shows that in the limit no volume is lost to infinity. The boundedness proof uses comparisons to derive a differential equation on volume growth.

Sections \ref{sec:congencurve} and \ref{sec:logconvexfh} focus on 2-dimensional surfaces of revolution with perimeter density and volume density equal. Our main Theorem \ref{MainTheorem} shows under the assumption that the product of the density and the metric factor is eventually log-convex
that, for large volumes, if the component farthest from the origin contains the origin, then an isoperimetric curve is a circle centered at the origin:

\begin{theorem}[Corollary \ref{cor:largevolume}]
\label{MainTheorem}
Consider $\R^2$ in polar coordinates $(r,\theta)$ with metric
$$ds^2=dr^2+h(r)^2d\theta^2$$
and radial density $f(r)$.
Suppose that $fh$ has positive derivatives and is eventually log-convex and
$(\log fh)'$ diverges to infinity.
Then, for large volumes, if the origin is interior to
the component farthest from the origin, an isoperimetric curve is a circle centered at the origin.
\end{theorem}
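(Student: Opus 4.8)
The plan is to combine the existence and boundedness theorems already in hand with a short two‑point curvature comparison, exploiting the monotonicity of $(\log fh)'$.

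\emph{Step 1 (the minimizer).} Fix a large weighted volume $V$. Theorem~\ref{thm:exist} supplies an isoperimetric region $\Omega$ of volume $V$, and Theorem~\ref{thm:bounded} makes it bounded. In dimension two the regularity theory for perimeter minimizers in a smooth density (after \cite{MP}) then gives that $\partial\Omega$ is a finite disjoint union of smooth embedded closed curves, all carrying one and the same constant generalized curvature $\lambda$ (the Lagrange multiplier), where for a curve bounding a region I write $\kappa_f:=\kappa-\langle\nabla\log f,\nu\rangle$ with $\nu$ the inward unit normal. The single computation I will reuse is that the coordinate circle $C_R=\{r=R\}$, which bounds the disk $D_R=\{r<R\}$, satisfies
$$\kappa_f(C_R)=(\log h)'(R)+(\log f)'(R)=(\log fh)'(R);$$
this equals $dP/dV$ along the concentric family, since $P(D_R)=2\pi\,fh(R)$ and $\operatorname{vol}(D_R)=2\pi\int_0^R fh$.

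\emph{Step 2 (two curvature comparisons).} Let $\Omega_0$ be the component of $\Omega$ farthest from the origin; by hypothesis the origin is interior to $\Omega_0$. Put $m:=\operatorname{dist}(0,\partial\Omega_0)=\min_{\partial\Omega_0}r>0$ and $M:=\sup_{\Omega_0}r=\sup_\Omega r$, the last equality because $\Omega_0$ is the farthest component. At a point of $\partial\Omega_0$ with $r=M$ the curve is internally tangent to $C_M$ (indeed $\Omega_0\subseteq D_M$), hence at least as curved toward the origin as $C_M$; since the normal there is radial, evaluating $\kappa_f$ gives $\lambda\ge(\log fh)'(M)$. Because the origin is interior to $\Omega_0$ we have $D_m\subseteq\Omega_0$, so at a point of $\partial\Omega_0$ with $r=m$ the curve is externally tangent to $C_m$, hence no more curved toward the origin than $C_m$, and the same bookkeeping gives $\lambda\le(\log fh)'(m)$. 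Hence
$$(\log fh)'(M)\ \le\ \lambda\ \le\ (\log fh)'(m),\qquad m\le M.$$

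\emph{Step 3 (large volume forces a circle).} Fix $r_0$ beyond which $(\log fh)'$ is non-decreasing. From $\operatorname{vol}(D_M)=2\pi\int_0^M fh\ge V$ and local integrability of $fh$ we get $M\to\infty$ as $V\to\infty$, so $(\log fh)'(M)\to\infty$. Once $V$ is large enough that $(\log fh)'(M)>\max_{[0,r_0]}(\log fh)'$, the case $m<r_0$ is impossible (it would give $\lambda\le(\log fh)'(m)\le\max_{[0,r_0]}(\log fh)'<(\log fh)'(M)\le\lambda$), so $m\ge r_0$. Then both endpoints lie in the log-convex range, whence $(\log fh)'(m)\le(\log fh)'(M)$; together with the display this forces all three quantities equal, so $(\log fh)'$ is constant on $[m,M]$. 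If $(\log fh)'$ is strictly increasing there --- the generic situation --- this means $m=M$, so $\partial\Omega_0\subseteq C_M$; being a finite union of smooth closed curves inside the single circle $C_M$ it equals $C_M$, and since $\Omega_0$ is bounded and contains the origin, $\Omega_0=D_M$. As $\Omega\subseteq\{r<M\}=D_M$ and $\Omega_0$ is a component, $\Omega=\Omega_0=D_M$: the isoperimetric curve is a circle about the origin.

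\emph{Main obstacle.} I expect two points to need real work. First, justifying the two comparisons of Step~2 rigorously from the regularity theory --- smoothness up to the tangency point and the correct orientations --- which is exactly the material of the section on constant-generalized-curvature curves together with \cite{MP}. Second, and more serious, is the degenerate subcase of Step~3: if $(\log fh)'$ is merely non-decreasing, one concludes only that $\partial\Omega_0$ lies in an annulus on which $fh$ is exponential--affine, and ruling out a non-circular constant-generalized-curvature curve enclosing the origin there requires an additional ingredient --- a phase-plane analysis of the curvature ODE, or a second-variation argument using $(\log fh)''\ge0$. If ``eventually log-convex'' is read with strict convexity this difficulty disappears and the argument above is complete.
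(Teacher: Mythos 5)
Your Steps 1--2 are sound and in fact reproduce the easy half of the paper's argument: the tangency comparison at the farthest point gives $\lambda\ge(\log fh)'(M)$ (in the paper, $\lambda=(\log fh)'(r(0))+\alpha'(0)$ with $\alpha'(0)\ge0$), and the comparison at the closest point gives $\lambda\le(\log fh)'(m)$ (this is the content of Proposition \ref{prop:fheventuallylogconvex}). The genuine gap is in Step 3, where you exclude $m<r_0$ via $\lambda\le(\log fh)'(m)\le\max_{[0,r_0]}(\log fh)'<(\log fh)'(M)$. That maximum is infinite in essentially every instance of the theorem: for the metric to close up at the origin one has $h(r)\sim r$ (or $\sinh r$, etc.), so $(\log fh)'(r)\sim 1/r\to+\infty$ as $r\to0^+$ --- for example the paper's own $fh=e^{r^2}\sinh r$. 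Consequently, when $m$ is small the bound $\lambda\le(\log fh)'(m)$ is enormous and yields no contradiction with $\lambda\ge(\log fh)'(M)$, no matter how large the volume: a small circle about the origin has huge generalized curvature, so a boundary component creeping toward the origin is perfectly compatible with a large Lagrange multiplier, and the two-point comparison alone cannot rule it out.

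That missing case --- the curve dipping into $\left\{r<r_0\right\}$ --- is precisely what most of Section \ref{sec:logconvexfh} is built to handle, and the fix is dynamic rather than pointwise. On $\left\{r\ge r_0\right\}$ the angle $\alpha$ from the radial direction satisfies $\alpha'\ge0$ (Lemma \ref{lem:alphaincrease}) and $\alpha''\ge0$ (Lemma \ref{lem:alphaaccel}); to descend from $r(0)$ to an intermediate radius $r_1>r_0$ the arclength-parametrized curve must travel at least $r_1-r_0$, which together with $\alpha\in[\pi/2,\pi]$ forces $\alpha'(t_1)\le\pi/(2(r_1-r_0))$. Evaluating the curvature identity at $r_1$ (not at $m$) then contradicts $\lambda\ge(\log fh)'(r(0))$ once $(\log fh)'(r(0))$ exceeds the threshold $\inf_{r>r_0}\bracket{(\log fh)'(r)+\pi/(2(r-r_0))}$ of Theorem \ref{thm:gammacirclelargevol}. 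You would need to supply an argument of this kind. Your second worry, the degenerate case where $(\log fh)'$ is constant on $[m,M]$, is less serious: equality in your chain forces $\alpha'(0)=0$, and uniqueness for the ODE $\lambda=(\log fh)'(r)\sin\alpha+\alpha'$ with initial data $\alpha(0)=\pi/2$, $\alpha'(0)=0$ already yields the circle with no strict convexity needed (Lemma \ref{lem:gamma'0}).
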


The idea of the proof, aided by Figure \ref{fig:expofig}, is as follows. We first show that if an isoperimetric curve is not
a circle centered at the origin, then it must go near the origin (Prop. \ref{prop:fheventuallylogconvex}).
By using estimates on the generalized curvature formula,
we prove that in the region where $fh$ is log-convex and nondecreasing,
the angle $\alpha$ from the radial vector to the tangent vector at
each point of the isoperimetric curve increases (Lemma \ref{lem:alphaincrease})
at an accelerating rate (Lemma \ref{lem:alphaaccel}).
Then we observe that in order for the isoperimetric curve to go near
the origin, it must travel a long distance, and the angle $\alpha$
would have to increase too much by what we have shown.
Putting these estimates together gives a contradiction
(Thm. \ref{thm:gammacirclelargevol}).

\begin{figure}[h]
\label{fig:expofig}
\centering
\includegraphics[width = \linewidth]{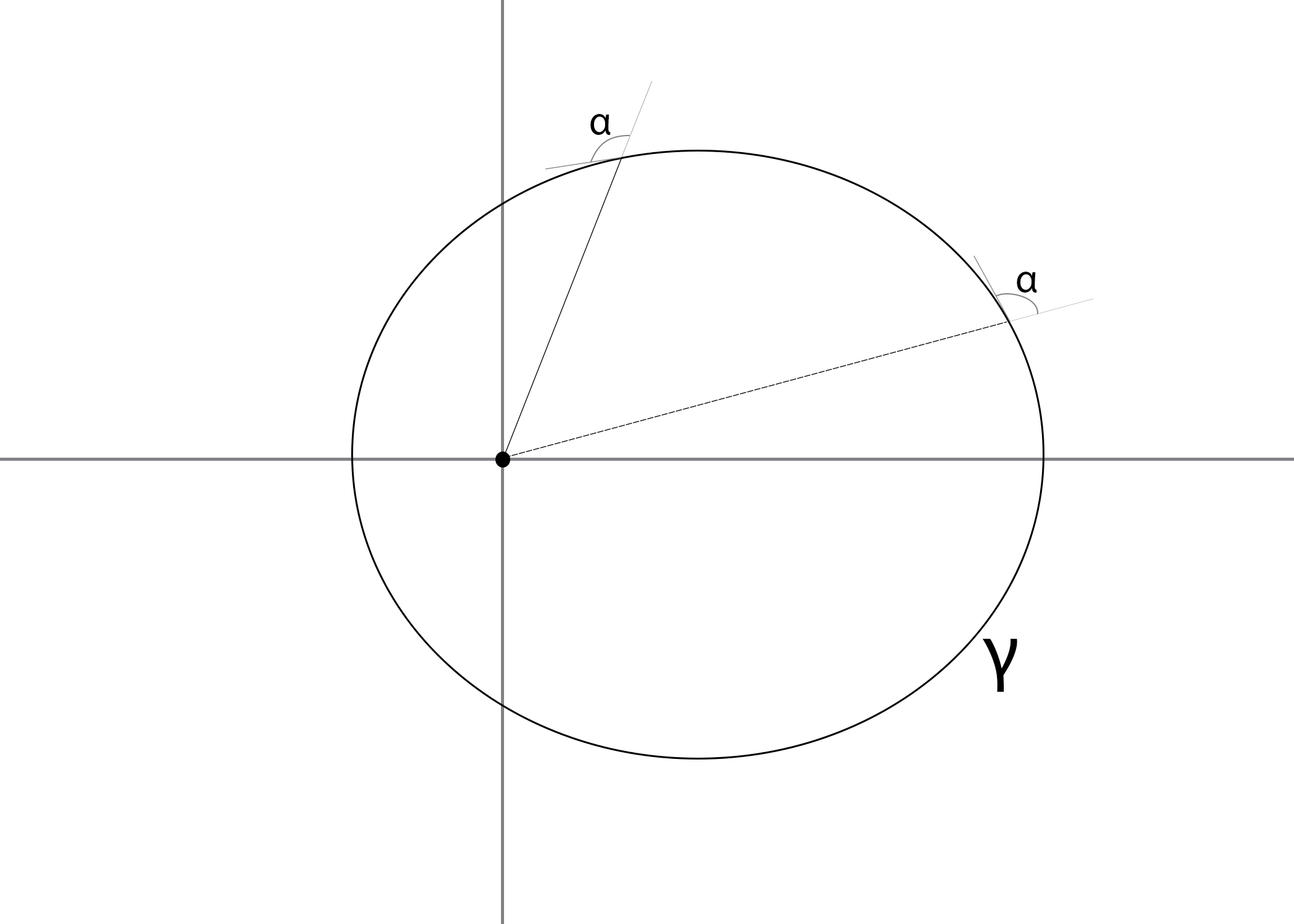}
\caption{The angle $\alpha$ from the radial vector to the tangent vector along the isoperimetric curve $\gamma$ increases at an accelerating rate, leading to a contradiction.}
\end{figure}

Similar results have been proven by Kolesnikov and Zhdanov \cite{KZ} for $\R^n$
with Euclidean metric
and Howe \cite{Ho} for a warped product of an interval with a Riemannian manifold,
without assuming that the component of an isoperimetric region
farthest from the origin contains the origin.
For details see Remark \ref{rem:otherresults}.

Finally, unless otherwise specified, when we mention perimeter and volume, we mean perimeter and volume weighted by the density. We also adopt the convention that $A\lesssim B$ if there is some positive dimension-dependent constant $c_n$ such that $A\leq c_n B$.

\section{Coordinates and First Variation}
\label{sec:prelim}
Let $H$ denote $\R^n$ in polar coordinates $(r,\Theta)$ with metric
$$ds^2 = dr^2 + h(r)^2d\Theta^2$$
and radial density $f(r)=e^{\psi(r)}$.
Define
$$B(r) := \{x \in H :|x| \leq r \}$$
as the ball of radius $r$.
For any region $E\subseteq H$, let $\abs{E}$ denote the measure of $E$.

\bigskip
The following first variation formula tells how perimeter varies as a region is deformed:

\begin{prop}
\label{prop:FirstVariation}
\emph{(First Variation Formula \cite [3.1, 3.2]{RCBM}).}
Let $f=e^\psi$ be a $C^1$ density on $H$. Then the initial first derivatives of volume and perimeter of a $C^1$ region $E$ with boundary $\partial E$, outward unit normal $\nu$, and inward mean curvature $H_0$, moving each boundary point $x$ with continuous normal velocity $u(x)$, are given by
$$V' = \int_{\partial E} u,\quad P' = \int_{\partial E} u H_f(x),$$
where
$$H_f(x)=H_0(x)+\pdif{\psi}{\nu}(x).$$
Consequently, for a smooth isoperimetric region, $H_f(x)$ is constant.
\end{prop}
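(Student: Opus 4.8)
The plan is to realize the deformation as the flow of a vector field and to differentiate the weighted volume and weighted area directly. First I would extend the normal velocity $u$ on $\del E$ to a $C^1$ vector field $X$ on a neighborhood of $\del E$ in $H$ with $X=u\nu$ along $\del E$; only the normal component of $X$ on $\del E$ enters the answer. Letting $\phi_t$ be the flow of $X$ (which exists and is $C^1$) and $E_t=\phi_t(E)$, write $V(t)=\int_{E_t} f$ and $P(t)=\int_{\del E_t} f$ against the Riemannian volume and area elements of $H$. For volume, changing variables by $\phi_t$ and differentiating — the $t$-derivative of the Jacobian of $\phi_t$ at $t=0$ is $\operatorname{div}X$ — gives $V'(0)=\int_E \operatorname{div}(fX)=\int_{\del E}\inner{fX,\nu}=\int_{\del E} uf$, which is exactly $\int_{\del E} u$ integrated against weighted perimeter measure; this is the first formula.

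For perimeter I would pull back to the fixed hypersurface $\del E$, so that $P(t)=\int_{\del E}(f\circ\phi_t)\,\phi_t^{*}(dA_t)$, and differentiate under the integral at $t=0$. The derivative of $f\circ\phi_t$ is $\inner{\nabla f,X}=u\,\pdif{f}{\nu}=uf\,\pdif{\psi}{\nu}$, using $\nabla f=f\nabla\psi$. The derivative of the pulled-back area element is the classical first-variation-of-area identity $(\operatorname{div}_{\del E}X)\,dA$, where $\operatorname{div}_{\del E}$ is the tangential divergence along $\del E$. Splitting $X$ along $\del E$ into its tangential and normal components, the tangential component contributes a pure surface divergence that integrates to zero on the closed hypersurface $\del E$, while the normal component gives $\operatorname{div}_{\del E}(u\nu)=u\operatorname{div}_{\del E}(\nu)=uH_0$ once the sign convention for the inward mean curvature $H_0$ is fixed (so that the sphere about the origin has $H_0>0$). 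Adding the two contributions and collecting $f$, one gets $P'(0)=\int_{\del E} uf\paren{H_0+\pdif{\psi}{\nu}}=\int_{\del E} u\,H_f$ against weighted perimeter measure, which is the second formula.

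Finally, if $E$ is isoperimetric then $P'(0)=0$ for every variation that is volume-preserving to first order, i.e. with $\int_{\del E} u=0$; taking $u=H_f-\bar H_f$, where $\bar H_f$ is the weighted average of $H_f$ over $\del E$ (admissible, since its weighted integral vanishes), yields $\int_{\del E}(H_f-\bar H_f)^2=0$, so $H_f$ is constant on $\del E$. The only real work is the first-variation-of-area identity together with keeping the mean-curvature sign consistent so that $H_f=H_0+\pdif{\psi}{\nu}$; at merely $C^1$ regularity one either restricts to smooth $E$, as in the statement, or reads the curvature term distributionally. I expect the main obstacle here to be the sign-convention bookkeeping through the warped-product metric rather than any conceptual difficulty.
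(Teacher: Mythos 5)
Your derivation is correct and is essentially the standard argument: the paper itself offers no proof of this proposition, quoting it directly from Rosales--Ca\~nete--Bayle--Morgan \cite[3.1, 3.2]{RCBM}, whose proof proceeds exactly as you describe (flow of a normal vector field, divergence theorem for the weighted volume, first variation of area plus the $\inner{\nabla f, X}$ term for the weighted perimeter, and the test function $u=H_f-\bar H_f$ for constancy). The only point you gloss is that ``volume-preserving to first order'' is not the same as admissible for the minimization problem, so concluding $P'(0)=0$ requires the standard two-parameter (Lagrange multiplier) correction of the variation; this is routine and does not affect the result.
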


\noindent The quantity $H_f(x)$ is called the \emph{generalized mean curvature}. By Proposition \ref{prop:FirstVariation}, it is the change of perimeter with respect to change in volume. (We are using the convention that the mean curvature is the sum rather than the mean of the principal curvatures.)

\section{Existence and Boundedness}
Existence and boundedness of isoperimetric regions for a single density for volume and perimeter on $\R^n$ were treated by Morgan and Pratelli \cite{MP}. Separate radial densities for volume and perimeter were treated by Di Giosia et al. \cite{DHKPZ}. We further allow certain radial metrics. More recently, other metrics have been treated by Pratelli and Saracco \cite{PS}.

To prove existence of isoperimetric regions, we begin with a lemma that puts a bound on the perimeter by using projection onto spheres.

\label{sec:existbounded}
\begin{lemma} 
\label{lem:projbound}
Let $H$ be $\R^n$ with metric
$$ds^2 = dr^2+h(r)^2d\Theta^2$$
and with nondecreasing density $f(r)$,
where $fh$ is nondecreasing.
If $E\subset H$ has finite volume, then for all $r > 0$,
$$\abs{\del E\setminus B(r)} \geq S(r),$$
where $S(r)$ is the area of the section of a sphere of radius $r$ sliced by the region $E$, $E\cap \del B(r)$.
\end{lemma}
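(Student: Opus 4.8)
The plan is to use \emph{radial projection} onto the sphere $\del B(r)$, in the spirit of Morgan and Pratelli \cite{MP}. If $\abs{\del E}=\infty$ the inequality is trivial (since $S(r)\le\abs{\del B(r)}<\infty$), so we may assume that $E$ has finite weighted perimeter; then its essential boundary $\del E$ is countably $(n-1)$-rectifiable. Writing a point of $\set{\abs{x}>r}$ in polar coordinates as $(\rho,\Theta)$, let $\pi\colon\set{\abs{x}>r}\to\del B(r)$ be the radial projection $\pi(\rho,\Theta)=(r,\Theta)$, which is smooth. I would establish two facts: (i) $\pi\bigl(\del E\setminus B(r)\bigr)$ contains almost every point of $E\cap\del B(r)$; and (ii) $\pi$ does not increase weighted $(n-1)$-dimensional area. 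Granting these, set inclusion together with the (weighted) area formula give
\[
S(r)=\abs{E\cap\del B(r)}\le\abs{\pi\bigl(\del E\setminus B(r)\bigr)}\le\abs{\del E\setminus B(r)},
\]
which is the claim (for a.e.\ $r$, which is all that is needed; by the coarea formula the slice $E\cap\del B(r)$ is defined up to a null set for a.e.\ $r$).

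For (i), fix an angular direction $\Theta$ and look at the radial ray $t\mapsto(r+t,\Theta)$, $t\ge 0$. Since $E$ has finite volume in an unbounded space, for almost every $\Theta$ this ray eventually leaves $E$ for good; more precisely, the restriction of $\mathbf{1}_E$ to the ray is a one-dimensional $BV$ function vanishing for large $t$, by the slicing theory for sets of finite perimeter together with Fubini. If the point $p=(r,\Theta)$ lies in $E\cap\del B(r)$, the ray starts inside $E$, so this restriction jumps from $1$ to $0$ at some radius $\rho_0>r$; the jump point lies in $\del E$, lies outside $B(r)$, and is carried by $\pi$ to $p$. Hence $\pi\bigl(\del E\setminus B(r)\bigr)\supseteq E\cap\del B(r)$ up to a null set, so its weighted area is at least $S(r)$.

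For (ii), at a point at radius $\rho\ge r$ the differential of $\pi$ annihilates $\del_r$ and rescales each spherical direction by $h(r)/h(\rho)$; hence the Riemannian Jacobian of $\pi$ restricted to any $(n-1)$-plane is at most $\paren{h(r)/h(\rho)}^{n-1}$, and, taking into account the change of density from $f(\rho)$ to $f(r)$, the weighted Jacobian there is at most $f(r)h(r)^{n-1}/\paren{f(\rho)h(\rho)^{n-1}}$, the ratio of the weighted areas of $\del B(r)$ and $\del B(\rho)$. This is $\le 1$ because the weighted area of the sphere $\del B(\rho)$ is nondecreasing in $\rho$, which is where the hypotheses that $f$ and $fh$ are nondecreasing enter. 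Plugging this pointwise bound into the area formula (multiplicity only helping) yields $\abs{\pi(\Sigma)}\le\abs{\Sigma}$ for $\Sigma=\del E\setminus B(r)$, and the lemma follows. I expect the main obstacle to be the measure-theoretic bookkeeping rather than the geometry: reducing cleanly to a set of finite perimeter, working with the essential and reduced boundary, justifying the one-dimensional slicing in (i), and invoking the area formula with the weight $f$. The geometric content of (ii) is the one-line computation of $d\pi$ above.
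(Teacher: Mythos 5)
Your proposal is correct and follows essentially the same route as the paper: radial projection onto $\partial B(r)$, the covering of $E\cap\partial B(r)$ by the projected boundary forced by finite volume (your ray-by-ray $BV$ slicing is a pointwise version of the paper's observation that otherwise a product set $(r,\infty)\times X$ of positive measure would lie in $E$ and give infinite volume), and the measure-nonincreasing property of the projection. Your explicit Jacobian bound $f(r)h(r)^{n-1}/\bigl(f(\rho)h(\rho)^{n-1}\bigr)$ is more careful than the paper's one-line assertion and in fact exposes that the operative hypothesis is that $fh^{n-1}$ be nondecreasing, which for $n>2$ does not follow from $f$ and $fh$ nondecreasing alone (e.g.\ $f=e^{3r}$, $h=e^{-2r}$, $n=3$) --- a gap already implicit in the paper's statement, and harmless in its applications, where either $n=2$ or $h$ itself is nondecreasing.
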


\begin{proof}
The idea of the proof is to use projection.
Let $\pi: \del E \setminus B(r) \to \del B(r)$ be the radial projection
of the boundary of $E$ outside of $B(r)$ onto the sphere $\partial B(r)$.
Since $fh$ is nondecreasing, $\pi$ is measure nonincreasing:
$$\abs{\pi\paren{\del E \setminus B(r)}} \leq \abs{\del E\setminus B(r)}.$$
It remains to show that the left-hand side is greater than or equal
to $S(r)$.
For this, it is sufficient to show that
$\pi\paren{\del E \setminus B(r)}$ covers $E\cap \partial B(r)$
up to a set of measure zero.

Suppose the contrary. Then there exists a subset $X\subseteq E \cap  \partial B(r)$
of positive measure that is disjoint from
$\pi\paren{\del E \setminus B(r)}$.
So the product of $(r,\infty) \times X$ in polar coordinates
must be disjoint from the boundary $\del E$.
Since $X\subseteq E$, we must also have that $(r,\infty) \times X$
is contained in $E$.  But this would imply that $\abs{E}$ is infinite
(because $X$ has positive measure and $fh$ is nondecreasing),
which is a contradiction.
\end{proof}

The following theorem shows the existence of isoperimetric regions
by generalizing arguments of Morgan and Pratelli \cite{MP}.

\begin{theorem}
\label{thm:exist}
Let $H$ be $\mathbb{R}^n$ with metric
$$ds^2 = dr^2 + h(r)^2d\Theta^2,$$
volume density $f(r)$, and perimeter density $g(r)$.
Suppose that $h$ is nondecreasing, $g$ diverges to infinity,
and $f\leq cg$ for some constant $c$. Then an isoperimetric region exists for every positive volume less than the volume of the space $H$.
\end{theorem}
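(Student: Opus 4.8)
The plan is the direct method, following Morgan and Pratelli \cite{MP}. Write $P(\cdot)$ for the $g$-weighted perimeter, $P(\cdot;U)$ for its restriction to an open set $U$, and $I(V):=\inf\{P(E):|E|=V\}$. Since $0<V<|H|$ and $r\mapsto|B(r)|$ increases continuously from $0$ to $|H|$, some ball has weighted volume $V$, with finite weighted perimeter, so $I(V)<\infty$ and there is a minimizing sequence $\{E_k\}$ with $|E_k|=V$, $P(E_k)\to I(V)$, and $P(E_k)\le I(V)+1$ eventually. On every ball $B(R)$ the metric coefficient $h$ and the densities $f,g$ are continuous and bounded between positive constants, so there weighted volume is comparable to Lebesgue measure and weighted perimeter dominates a fixed multiple of Euclidean perimeter; the compactness theorem for sets of finite perimeter, applied along an exhaustion $B(R)\uparrow H$ with a diagonal argument, yields a subsequence (not relabeled) and a set $E$ of locally finite perimeter with $E_k\to E$ in $L^1_{\mathrm{loc}}(H)$. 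Because $|E_k|=V$ for all $k$, the convergence gives $|E\cap B(R)|=\lim_k|E_k\cap B(R)|$ for every $R$, hence $|E|\le V$; and lower semicontinuity of weighted perimeter on each $B(R)$ gives, on letting $R\to\infty$, $P(E)\le\liminf_kP(E_k)=I(V)$. So $E$ is an isoperimetric region as soon as one shows that no weighted volume is lost to infinity, i.e. $|E|=V$. This is the entire content of the theorem, and it is where all three hypotheses enter.

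For the no-escape step I would argue by contradiction: suppose $m:=V-|E|>0$. Since $|E_k|=V$ and $E_k\to E$ locally, $\lim_k|E_k\setminus B(R)|=V-|E\cap B(R)|\ge m$ for every $R$, so a diagonal choice produces radii $\rho_k\to\infty$ with $|E_k\setminus B(\rho_k)|\ge m/2$. The mechanism ruling this out is that weighted volume carried far from the origin is very expensive in weighted perimeter. In the spirit of Lemma \ref{lem:projbound}: for $F\subseteq H\setminus B(R)$ of finite weighted volume, radial projection onto $\partial B(R)$ is Euclidean-area-nonincreasing because $h$ is nondecreasing, and (as $F$ has finite volume) the projection of $\partial^* F\cap\{r>R\}$ covers the section $F\cap\partial B(R)$ up to a null set; iterating this over all radii $\ge R$ and weighting by $g$ — where $g\to\infty$ makes the outer boundary increasingly expensive, and $f\le cg$ prevents the weighted volume $|F|$ from being carried at negligible Euclidean boundary cost — one obtains $P(F)\ge\Phi(R)$ for some $\Phi$ with $\Phi(R)\to\infty$, uniformly over $F\subseteq H\setminus B(R)$ with $|F|\ge m/2$. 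Applying this to $F=E_k\setminus B(\rho_k)$ (and choosing $\rho_k$ within the range of radii that still capture volume $\ge m/2$ so that the section $E_k\cap\partial B(\rho_k)$ itself costs only $o(\Phi(\rho_k))$, by averaging over a window of radii of length $\to\infty$) forces $P(E_k)\ge P(E_k;H\setminus B(\rho_k))\to\infty$, contradicting $P(E_k)\le I(V)+1$. Hence $m=0$, $|E|=V$, and $E$ minimizes.

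I expect the divergence estimate just sketched to be the main obstacle. Lemma \ref{lem:projbound} compares one spherical section at a time and is stated for the single product $fh$; here one must instead bound $P(E_k;H\setminus B(\rho_k))$ from below by a quantity genuinely growing in $\rho_k$ while keeping the two densities separate — $f$ controlling how much Euclidean volume, hence how large a boundary section, the escaping piece must have, and $g$, via $g\to\infty$, making that boundary expensive — and while handling both the case of a compact escaping piece far out (small Euclidean boundary but enormous $g$) and that of a thin shell (whose boundary contains near-full sphere sections). The natural organization, which I would follow, is the Morgan--Pratelli scheme: turn the projection estimate into a differential inequality for the Euclidean section-area profile $t\mapsto\mathcal H^{n-1}(E_k\cap\partial B(t))$ of the escaping part, integrate it against $g$, and balance the bounded cost of the cutting section against the divergence of $\inf_{s\ge\rho_k}g$.
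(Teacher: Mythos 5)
Your overall architecture is the same as the paper's: direct method, compactness and lower semicontinuity on an exhaustion, and then a contradiction from the hypothesis that a fixed amount $m>0$ of weighted volume escapes to infinity, using the projection lemma, the spherical isoperimetric inequality, the coarea formula, and the hypotheses $g\to\infty$ and $f\leq cg$. But the decisive quantitative step is exactly the one you defer (``I expect the divergence estimate just sketched to be the main obstacle''), and as written your proposal does not contain it. Asserting the existence of a function $\Phi(R)\to\infty$ with $P(F)\geq\Phi(R)$ for every $F\subseteq H\setminus B(R)$ carrying volume $\geq m/2$ is a restatement of what must be proved, not a proof; and the dichotomy you describe (compact piece far out versus thin shell) does not by itself yield a uniform bound, because a general escaping set interpolates between the two regimes and one must control it by a single quantity.

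The paper closes this gap with a short multiplicative trick that your sketch does not reach. Let $S_k(r)$ be the unweighted area of the section $E_k\cap\del B(r)$, $M_k=\sup_{r\geq R}S_k(r)$, and $g_-=\min_{r\geq R}g(r)$. The projection lemma (unweighted, using $h$ nondecreasing) gives the first lower bound $\abs{\del E_k}\geq M_k\,g_-$; in particular $M_k$ is uniformly small once $R$ is large, which legitimizes the relative isoperimetric inequality on each sphere, $p_k(r)\gtrsim S_k(r)^{(n-2)/(n-1)}\geq S_k(r)\,M_k^{-1/(n-1)}$. Feeding this into the coarea formula and using $f\leq cg$ together with $\int_R^\infty S_k(r)f(r)\,dr\geq\eps$ gives the second lower bound $\abs{\del E_k}\gtrsim \eps\,M_k^{-1/(n-1)}$. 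Multiplying the two bounds eliminates the unknown $M_k$ and yields $\abs{\del E_k}^{n/(n-1)}\gtrsim\eps\,g_-^{1/(n-1)}$, which contradicts the uniform bound $\abs{\del E_k}\leq I(V)+1$ since $g_-\to\infty$ as $R\to\infty$. Note also that no truncation at radii $\rho_k$, no averaging over windows of radii, and no control of the cutting sections $E_k\cap\del B(\rho_k)$ are needed for existence (those devices belong to the boundedness argument, Theorem \ref{thm:bounded}); the contradiction is obtained purely from a lower bound on $\abs{\del E_k}$ itself. You should replace your second and third paragraphs with this computation.
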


\begin{proof} The proof closely follows Morgan and Pratelli
\cite[Thm. 3.3]{MP}. The idea is to take a sequence of sets
with perimeters converging to the infimum  and extract a convergent subsequence.
The concern is that in the limit, some volume may be lost to infinity.
We suppose that there is some volume lost to infinity and show that it contradicts our assumption that perimeter density diverges to infinity.

Let $V$ be the prescribed (weighted) volume.
Consider a sequence of
smooth sets $E_j$  of volume $V$ and
$\abs{\del E_j}$ converges to the infimum. By compactness,
we may assume the sequence converges to a limit set $E$.

Suppose that some volume is lost to infinity. Then there exists $\eps > 0$ such that, for all $R>0$,
\begin{equation}
\label{eq:exist-lostvol}
\abs{E_j\setminus B(R)} \geq \eps
\end{equation}
for all $j$ large enough. Inequality \eqref{eq:exist-lostvol} then becomes
$$\int_R^\infty S_{j}(r) f(r)\, dr \geq \eps,$$
where $S_{j}(r)$ is the unweighted area under the metric $ds$ of the slice of $E_j$ by the sphere of radius $r$.
Define 
$$M_j:=\sup_{r\geq R} S_{j}(r), \quad g_- := \min_{r\geq R} g(r).$$
Notice that $g_-$ exists because $g$ is continuous and diverges to infinity.
Then by Lemma \ref{lem:projbound} (for unweighted volume) we have, for all $r\geq R$,
$$\abs{\del E_j} \geq \abs{\del E_j \setminus B(r)}
\geq \abs{\del E_j \setminus B(r)}_0 g_- \geq S_{j}(r) g_-,$$
where the subscript $0$ denotes the unweighted version. Therefore,
\begin{equation}
\label{eq:exist-firstineq}
\abs{\del E_j} \geq M_j g_-.
\end{equation}

For large $R$, since $g$ diverges,
$M_j$ is small (uniformly for all $j$),
and hence $S_j(r)$ is small.
Thus by the isoperimetric inequality on a sphere,
for all $r\geq R$,
$$p_{j}(r) \gtrsim S_{j}(r)^{\frac{n-2}{n-1}},$$
where $p_j(r)$ is the unweighted perimeter of the slice
of $E_j$ by the sphere of radius $r$.
Therefore, by the coarea formula,
\begin{align}
\label{eq:exist-bigineq}
\abs{\del E_j} &\geq \int_R^\infty p_{j}(r) g(r) \,dr 
\geq \frac{1}{c}\int_R^\infty p_{j}(r)f(r) \,dr \nonumber \\
&\gtrsim \int_R^\infty S_{j}(r)^{\frac{n-2}{n-1}} f(r) \,dr \nonumber\\
&\geq \frac{1}{({M_j})^{\frac{1}{n-1}}}
\int_R^\infty S_{j}(r) f(r) \,dr \nonumber \\
&\geq \frac{1}{(M_j)^{\frac{1}{n-1}}} \eps.
\end{align}
By \eqref{eq:exist-firstineq} and \eqref{eq:exist-bigineq}, 
$$\abs{\del E_j}^\frac{n}{n-1} \gtrsim
\eps g_-^\frac{1}{n-1}.$$
Since the left-hand side is uniformly bounded, $g_-$ is bounded
independent of $R$.
This contradicts that assumption that $g$ goes to infinity.

Therefore, there is no volume lost to infinity, $E$ has the prescribed volume and realizes the infimum perimeter.
\end{proof}

\begin{remark}
The argument used in Theorem \ref{thm:exist}
can be used to prove the existence of a perimeter-minimizing $n$-bubble
for any $n$ given volumes.
This can be shown by considering a sequence of $n$-bubbles with prescribed volumes whose perimeters tend towards the infimum.
If some volume is lost to infinity in the limit of the sequence,
then the same argument shows that the bubbles in the sequence
have perimeters going to infinity, which cannot be the case.
\end{remark}

Finally, by again generalizing arguments of Morgan and Pratelli \cite{MP}, we prove
boundedness of isoperimetric regions.

\begin{theorem}
\label{thm:bounded}
Let $H$ be $\mathbb{R}^n$ with metric
$$ds^2 = dr^2 + h(r)^2d\Theta^2,$$
volume density $f(r)$, and perimeter density $g(r)$.
Suppose that $gh$ is nondecreasing, $g^{n/(n-1)}/f$ is nondecreasing,
and $\int_0^\infty f^{1/n}$ diverges.
Then every isoperimetric region is bounded.
\end{theorem}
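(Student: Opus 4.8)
We follow the scheme of Morgan and Pratelli \cite{MP}: assuming $E$ is an unbounded isoperimetric region of weighted volume $V_0$, we extract a differential inequality for the volume of $E$ lying outside a ball and integrate it, using that $\int_0^\infty f^{1/n}$ diverges, to reach a contradiction. Set $m(R):=\abs{E\setminus B(R)}$, the $f$-weighted volume of the tail. Since $\abs E<\infty$ we have $m(R)\to0$, and since $E$ is not bounded, $m(R)>0$ for every $R$. For a.e.\ $R$ the coarea formula gives $m'(R)=-f(R)S(R)$, where $S(R)$ is the \emph{unweighted} area of the slice $E\cap\del B(R)$ inside the round sphere $\del B(R)$ of radius $h(R)$ (on which $g$ is constant); let $p(R)$ be the unweighted perimeter of that slice in $\del B(R)$, and set $L(R):=\abs{\del E\setminus B(R)}$, so that again by coarea $L(R)\ge\int_R^\infty g(\rho)p(\rho)\,d\rho$.

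The first step is an upper bound for $L(R)$ from a cut-and-replace competitor. Truncate to $E\cap B(R)$ and restore the lost volume by adjoining a small geodesic ball $F$ of $f$-volume $m(R)$ inside a fixed ball disjoint from $\overline E$ --- such a ball exists since the prescribed volume is strictly less than the volume of $H$ --- so that $\abs{\del F}\lesssim m(R)^{(n-1)/n}$ once $R$ is large. For a.e.\ $R$ one has $\abs{\del(E\cap B(R))}=\abs{\del E}-L(R)+g(R)S(R)$, and since the competitor $(E\cap B(R))\sqcup F$ has volume $V_0$, minimality of $E$ gives
$$L(R)\ \le\ g(R)S(R)+\abs{\del F}\ \lesssim\ g(R)S(R)+m(R)^{(n-1)/n}.$$
For the matching lower bound, apply the isoperimetric inequality on each sphere $\del B(\rho)$: once $R$ is large the slices are at most hemispheres (this is where $gh$ nondecreasing enters, via a projection estimate in the spirit of Lemma \ref{lem:projbound} applied with perimeter density $g$), so $p(\rho)\gtrsim S(\rho)^{(n-2)/(n-1)}$ and hence $L(R)\gtrsim G(R):=\int_R^\infty g(\rho)S(\rho)^{(n-2)/(n-1)}\,d\rho$, with $G(R_0)\lesssim\abs{\del E}<\infty$.

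Combining the two bounds and absorbing the reinsertion term $m(R)^{(n-1)/n}$, which the decay of the slices renders lower order, one gets $G(R)\lesssim g(R)S(R)$ for a.e.\ large $R$. Since $-G'(R)=g(R)S(R)^{(n-2)/(n-1)}$ and $S(R)\gtrsim G(R)/g(R)$, this yields
$$-G'(R)\ \gtrsim\ g(R)\Bigl(\tfrac{G(R)}{g(R)}\Bigr)^{(n-2)/(n-1)}\ =\ g(R)^{1/(n-1)}\,G(R)^{(n-2)/(n-1)}.$$
Dividing by $G(R)^{(n-2)/(n-1)}$ and integrating from a large $R_0$, the left-hand side is exactly $(n-1)\,G(R_0)^{1/(n-1)}<\infty$, while the right-hand side is $\gtrsim\int_{R_0}^\infty g(R)^{1/(n-1)}\,dR$. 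Finally, since $g^{n/(n-1)}/f$ is nondecreasing it is bounded below by a positive constant on $[R_0,\infty)$, so $g^{1/(n-1)}\gtrsim f^{1/n}$ there, and the right-hand side is $\gtrsim\int_{R_0}^\infty f^{1/n}=\infty$. This contradiction forces $m$ to vanish for $R$ large; that is, $E$ is bounded.

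I expect the main obstacle to be the competitor of the second step together with the accounting around it. In flat $\R^n$ the lost volume $m(R)$ is recovered cheaply by dilating the truncation $E\cap B(R)$, but a genuine metric factor $h$ and two separate densities rule this out, so one must instead insert the volume by hand --- a small ball, or perhaps more sharply a normal variation of $\del E\cap B(R)$ using that $E$ has constant generalized mean curvature --- and verify that the perimeter price is controlled uniformly in $R$ and is genuinely absorbed. One then has to confirm that the three hypotheses do precisely their jobs --- $gh$ nondecreasing to keep the slices sub-hemispherical so the spherical isoperimetric inequality is usable, $g^{n/(n-1)}/f$ nondecreasing to convert $\int g^{1/(n-1)}$ into $\int f^{1/n}$, and $\int_0^\infty f^{1/n}=\infty$ to diverge --- and in particular that all the factors of $f$, $g$, and $h$ reconcile at each step, which is where the $h$-dependence makes the argument more delicate than the Euclidean one.
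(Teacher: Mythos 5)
Your overall scheme (truncate outside $B(R)$, restore the lost volume, compare via the spherical isoperimetric inequality and the coarea formula, then integrate a differential inequality against $\int_0^\infty f^{1/n}=\infty$) is the same as the paper's, but there is a genuine gap at exactly the step you flag as the main obstacle: the absorption of the reinsertion cost. From your competitor you obtain $L(R)\le g(R)S(R)+C\,m(R)^{(n-1)/n}$, and from the spherical isoperimetric inequality $L(R)\gtrsim G(R)$; you then assert that $m(R)^{(n-1)/n}$ is ``lower order'' and conclude $G(R)\lesssim g(R)S(R)$. Nothing in the hypotheses justifies this. Indeed, $m(R)^{(n-1)/n}$ is precisely the order of magnitude of the lower bound one ultimately needs to prove for $L(R)$ (namely $L(R)\gtrsim \paren{g/f^{(n-1)/n}}m(R)^{(n-1)/n}$, where $g/f^{(n-1)/n}=\paren{g^{n/(n-1)}/f}^{(n-1)/n}$ is only bounded below, not divergent), so it cannot be discarded as an error term; comparing it instead with $G(R)=\int_R^\infty gS^{(n-2)/(n-1)}$ fares no better, since $m(R)=\int_R^\infty fS$ and the hypotheses give no pointwise comparison forcing $m^{(n-1)/n}\lesssim G$. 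Your choice of competitor makes the problem intrinsically unremovable: restoring volume $\eps$ by a disjoint small ball costs $\approx\eps^{(n-1)/n}$, the same size as the truncation savings you are trying to isolate.

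The paper (following Morgan--Pratelli) resolves this by reordering the argument and using a cheaper restoration. First, with no competitor at all, it combines the projection bound $P(r)\geq\abs{E_r}_g$, the spherical isoperimetric inequality, and coarea into the differential inequality $-\frac{d}{dr}\paren{P(r)^{n/(n-1)}}\gtrsim -\paren{g^{n/(n-1)}/f}V'(r)$, and integrates it --- pulling the nondecreasing factor $g^{n/(n-1)}/f$ out of the integral --- to get the key estimate $P(r)\gtrsim \paren{g(r)/f(r)^{(n-1)/n}}V(r)^{(n-1)/n}$. Only then does it introduce a competitor, and the lost volume $\eps=V(r)$ is restored not by a ball but by a normal variation of $\del E$ inside a fixed ball, whose perimeter cost is \emph{linear}, $\eps\paren{H(E)+1}$, by the first variation formula. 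Since $\eps=o\paren{\eps^{(n-1)/n}}$ and $g/f^{(n-1)/n}$ is bounded below, the truncation savings $P(r)$ strictly dominates the restoration cost, yielding $\abs{E_r}_g\gtrsim \paren{g/f^{(n-1)/n}}V(r)^{(n-1)/n}$, hence $-V'(r)\gtrsim f(r)^{1/n}V(r)^{(n-1)/n}$, whose integration drives $V(r)^{1/n}$ to $-\infty$. To close your argument you need both of these ingredients: the constant-generalized-mean-curvature variation in place of the disjoint ball, and the integrated lower bound on $P(r)$ obtained \emph{before} the competitor is used, so that the final comparison is between $P(r)$ and the linear term $\eps\paren{H(E)+1}$ rather than between $G(R)$ and $m(R)^{(n-1)/n}$.
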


\begin{proof}
This proof closely follows Morgan-Pratelli \cite[Thm. 5.9]{MP}.  We begin by supposing that an isoperimetric region is unbounded.  Then from the isoperimetric inequality and the coarea formaula we derive that the volume of the region outside the ball of radius $r$ decreases uniformly
and hence becomes negative as $r$ increases, which is a contradiction.

Suppose that an isoperimetric region $E$ is unbounded.  Define
$$E_r:= E \cap \del B(r),$$
$$P(r) := \abs{\del E \setminus B(r)}_g,
\quad V(r):= \abs{E\setminus B(r)}_f,$$
where the subscript denotes the density for the measure.
By Lemma \ref{lem:projbound} for density $g$,
since $gh$ is nondecreasing,
\begin{equation}
\label{eq:bound-slicebound}
P(r) \geq \abs{E_r}_g.
\end{equation}
For $r$ large, $P(r)$ is small
and therefore $\abs{E_r}_g$ is small,
while the $g$-weighted volume of the sphere of radius $r$ is not small because
$gh$ is nondecreasing.
So the isoperimetric inequality on a sphere applies:
$$\abs{\del E_r}_0 \gtrsim \abs{E_r}^{\frac{n-2}{n-1}}_0,$$
where the subscript 0 indicates unweighted measure.
Multiplying both sides by the density $g(r)$ yields
\begin{equation}
\label{eq:bound-isoineq}
\abs{\del E_r}_g \gtrsim g(r)^{\frac{1}{n-1}}
\abs{E_r}^{\frac{n-2}{n-1}}_g.
\end{equation}
Inequalities \eqref{eq:bound-slicebound} and \eqref{eq:bound-isoineq}
then imply that
\begin{equation}
\label{eq:bound-combinedineq}
\abs{\del E_r}_g \gtrsim g(r)^{\frac{1}{n-1}}
P(r)^{-\frac{1}{n-1}}\abs{E_r}_g.
\end{equation}

Using the coarea formula \cite[\textsection 4.11]{Mo}, we can say that,
\begin{equation}
\label{eq:bound-sliceperi}
-P'(r) \geq \abs{\del E_r}_g.
\end{equation}
Meanwhile
\begin{equation}
\label{eq:bound-slicevol}
- V'(r)= \abs{E_r}_f.
\end{equation}
By inequalities \eqref{eq:bound-combinedineq}, \eqref{eq:bound-sliceperi},
and \eqref{eq:bound-slicevol},
\begin{align*}
-P'(r) &\gtrsim g(r)^{\frac{1}{n-1}} P(r)^{-\frac{1}{n-1}}\abs{E_r}_g \\
&= - \frac{g(r)^\frac{n}{n-1}}{f(r)} P(r)^{-\frac{1}{n-1}} V'(r),
\end{align*}
which simplifies to
$$-\frac{d}{dr} \paren{P(r)^{\frac{n}{n-1}}} \gtrsim
- \frac{g(r)^\frac{n}{n-1}}{f(r)} \frac{d}{dr} V(r),$$
where $c_n$ is a new dimensional constant.
Since $E$ has finite perimeter and volume,
$P(r)$ and $V(r)$ both go to zero as $r$ goes to infinity.
Hence integration of both sides of the previous inequality yields 
\begin{align}
P(r)^{\frac{n}{n-1}} &\gtrsim
- \int_r^\infty \frac{g(t)^\frac{n}{n-1}}{f(t)} \frac{d}{dt} V(t)\,dt
\nonumber \\
&\geq - \frac{g(r)^\frac{n}{n-1}}{f(r)} \int_r^\infty \frac{d}{dt} V(t)\,dt
\nonumber \\
&= \frac{g(r)^\frac{n}{n-1}}{f(r)} V(r),
\label{eq:bound-ptov}
\end{align}
because $g^{n/(n-1)}/f$ is nondecreasing
(and the right-hand side is positive).

Choose $R$ so that the interior of the ball of radius $R$
contains part of the boundary of $E$.
Then, for sufficiently small $\eps>0$, we can define a set $E_\eps$
by introducing a variation on the boundary of $E$ inside $B(R)$
to increase the weighted volume by $\eps$.
Since the (constant) generalized mean curvature $H(E)$ is $dP/dV$
(Prop. \ref{prop:FirstVariation}) we have
$$\lim_{\eps \to 0}\frac{|\del E_\eps|_g-|\del E|_g}{\eps}=H(E).$$

Therefore, for small $\eps$,
\begin{equation}
\label{eq:bound-HE}
\abs{\del E_\eps}_g \leq \abs{\del E}_g + \eps \paren{H(E) + 1}.
\end{equation}
Take $r>R$ large enough
such that $\eps=V(r)$ is small enough for this construction.
If we replace $E_\eps$ by $\widetilde{E}:= E_\eps \cap B(r)$,
discarding the volume $V(r)$, then $\widetilde{E}$
is back to the original volume of $E$.
Since $E$ is isoperimetric,
\begin{equation}
\label{eq:bound-isoperimetric}
|\del\widetilde{E}|_g \geq \abs{\del E}_g.
\end{equation}
On the other hand, since $\widetilde{E}$ loses the perimeter $P(r)$
outside the ball and creates new perimeter $E_r$,
it follows that
\begin{align}
|\del\widetilde{E}|_g &= \abs{\del E_\eps}_g - P(r) + \abs{E_r}_g \nonumber \\
&\leq \abs{\del E}_g + \eps\paren{H(E)+1} -
c_n\frac{g(r)}{f(r)^{\frac{n-1}{n}}}\eps^{\frac{n-1}{n}} + \abs{E_r}_g
\label{eq:bound-bigineq}
\end{align}
by inequalities \eqref{eq:bound-HE} and \eqref{eq:bound-ptov}, where $c_n$ is a dimension-dependent constant.

For $r$ large, $\eps$ is small, and so
$\eps^{\frac{n-1}{n}}$ asymptotically dominates $\eps$.
From \eqref{eq:bound-isoperimetric} and \eqref{eq:bound-bigineq},
\begin{equation}
\label{eq:bound-ertovr}
\abs{E_r}_g \gtrsim \frac{g(r)}{f(r)^{\frac{n-1}{n}}}\eps^{\frac{n-1}{n}}
= \frac{g(r)}{f(r)^{\frac{n-1}{n}}}V(r)^{\frac{n-1}{n}}.
\end{equation}

Note that
$$\abs{E_r}_f = \abs{E_r}_g \frac{f(r)}{g(r)}.$$
Therefore by \eqref{eq:bound-slicevol} and \eqref{eq:bound-ertovr},
for $r$ sufficiently large,
$$-V'(r) \gtrsim f(r)^{\frac{1}{n}}V(r)^{\frac{n-1}{n}},$$
which is equivalent to 
$$\frac{d}{d r} \paren{V(r)^{\frac{1}{n}}}
\lesssim - f(r)^{\frac{1}{n}}.$$
Integrating both sides and using the fact that
$\int_0^\infty f^{1/n}$ diverges, we find that
$V(r)\to-\infty$ as $r\to\infty$, which is a contradiction.
Therefore $E$ is bounded. 
\end{proof}

\section{Constant Generalized Curvature Curves in 2D}
\label{sec:congencurve}
In this section, we consider the 2D case,
$\R^2$ in polar coordinates $(r,\theta)$ with metric
$$ds^2 = dr^2 + h(r)^2d\theta^2$$
and radial density $f(r)=e^{\psi(r)}$.
Following Chambers \cite[Sect. 2]{Ch},
let $A$ be an isoperimetric set spherically symmetrized.
Let $\gamma:[-\beta,\beta]\to \R^2$ be the arclength paramaterization
of the most distant component of the boundary of $A$ from the
leftmost point on the $x$-axis back to itself, counterclockwise.
Then $\gamma$ is symmetric about the $x$-axis,
$\gamma(0)$ and $\gamma(\pm\beta)$ are on the $x$-axis,
$\gamma$ is above the $x$-axis on $(0,\beta)$,
and $\gamma$ is below the $x$-axis on $(-\beta,0)$.
By known regularity \cite{Mo}, $\gamma$ is a smooth curve.

Let $\hat{r}(t)$ and $\hat{\theta}(t)$ be the orthonormal basis vectors
of the tangent space at $\gamma(t)$ in the radial and tangential directions
(unless $\gamma(t)$ is the origin).
Let $\alpha(t)$ be the counter-clockwise angle measured from $\hat{r}(t)$
to $\gamma'(t)$ at $\gamma(t)$. Note that the angles are measured with respect
to the defined metric and not the standard metric in $\mathbb{R}^2$. 

Observe that
$$\gamma'=r'\hat{r}+h(r)\theta'\hat{\theta},$$
\begin{equation}
\label{eqn:rthetaalpha}
r'=\cos\alpha,\quad h(r)\theta'=\sin\alpha.
\end{equation}
Let $\kappa(t)$ be the inward (leftward) curvature of $\gamma$ at $\gamma(t)$.
The generalized curvature is
$$\kappa_f(t)=\kappa(t)+\pdif{\psi}{\nu},$$
where $\nu$ is the unit outward normal at $\gamma(t)$.
Recall that $f=e^\psi$.
By the first variation formula (Prop. \ref{prop:FirstVariation}) and the fact that $A$ is an isoperimetric region,
$\kappa_f(t)$ is constant for all $t$.

We seek to analyze the constant generalized curvature curve $\gamma$.
First we need an explicit formula for the curvature.

\begin{lemma}
\label{lem:curvaturepolar}
The curvature of $\gamma$ at $t$ is
$$\kappa(t)=h(r)^2h'(r)\theta'^3+2h'(r)r'^2\theta'
+h(r)\paren{r'\theta''-\theta'r''},$$
where the polar coordinates $(r,\theta)$ of $\gamma$
are functions of $t$.
\end{lemma}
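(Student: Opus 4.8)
The plan is to compute the geodesic curvature of the curve $\gamma$ directly from the Riemannian metric $ds^2 = dr^2 + h(r)^2 d\theta^2$, expressing everything in terms of the polar coordinate functions $r(t)$, $\theta(t)$ and their derivatives. I would use the intrinsic formula for the signed geodesic curvature of a unit-speed curve, namely $\kappa = \langle \nabla_{\gamma'}\gamma', J\gamma'\rangle$, where $J$ is rotation by $\pi/2$ in the oriented tangent plane (chosen so that the curvature is inward/leftward), and $\nabla$ is the Levi-Civita connection of the metric. Since $\gamma$ is parametrized by arclength, $|\gamma'|^2 = r'^2 + h(r)^2\theta'^2 = 1$, which is consistent with \eqref{eqn:rthetaalpha}, and this constraint will be used to simplify.

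The key steps, in order: (1) Record the nonzero Christoffel symbols for this warped metric. With coordinates $x^1 = r$, $x^2 = \theta$ and $g_{11} = 1$, $g_{22} = h(r)^2$, the only nonvanishing ones are $\Gamma^1_{22} = -h h'$ and $\Gamma^2_{12} = \Gamma^2_{21} = h'/h$. (2) Write the covariant acceleration $\nabla_{\gamma'}\gamma'$ in components: its $r$-component is $r'' + \Gamma^1_{22}\theta'^2 = r'' - h h' \theta'^2$, and its $\theta$-component is $\theta'' + 2\Gamma^2_{12} r'\theta' = \theta'' + 2(h'/h) r'\theta'$. (3) Form $J\gamma'$: in the orthonormal frame $\hat r = \partial_r$, $\hat\theta = h^{-1}\partial_\theta$, we have $\gamma' = r'\hat r + h\theta'\hat\theta$, so the leftward normal is $J\gamma' = -h\theta'\,\hat r + r'\,\hat\theta = -h\theta'\,\partial_r + (r'/h)\,\partial_\theta$. (4) Take the metric inner product of the acceleration with $J\gamma'$, remembering that $g$ pairs the $\partial_r$ parts with weight $1$ and the $\partial_\theta$ parts with weight $h^2$:
\[
\kappa = -h\theta'\paren{r'' - h h'\theta'^2} + h^2 \cdot \frac{r'}{h}\paren{\theta'' + \tfrac{2h'}{h} r'\theta'}.
\]
(5) Expand and regroup: the $h^2 h' \theta'^3$ term and the $2h' r'^2\theta'$ term appear immediately, and the remaining terms combine into $h(r)(r'\theta'' - \theta' r'')$, giving exactly the claimed expression. (A sign check against the convention that a small circle about the origin traversed counterclockwise has positive inward curvature confirms the orientation.)

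The main obstacle is bookkeeping rather than conceptual: getting every sign and every factor of $h$ versus $h^2$ correct, particularly because the "inward/leftward" convention for $\kappa$ must be matched to the counterclockwise orientation of $\gamma$ and to the direction of the outward normal $\nu$ used later in $\kappa_f = \kappa + \partial\psi/\partial\nu$. One should double-check by specializing to $h(r) = r$ (the flat Euclidean case), where the formula must reduce to the standard polar curvature $\kappa = (r^2\theta'^3 + 2r'^2\theta' + r(r'\theta'' - \theta' r''))$ — equivalently $(r^2 + 2r'^2 - r r'')/(r^2 + r'^2)^{3/2}$ after using arclength — and by specializing to $h$ constant (a flat cylinder), where it should reduce to the planar curvature in the developed coordinates. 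These two sanity checks pin down all constants and signs.
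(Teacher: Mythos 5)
Your proposal is correct and follows essentially the same route as the paper: both compute the Christoffel symbols of the metric $ds^2 = dr^2 + h(r)^2 d\theta^2$ (obtaining $\Gamma^1_{22} = -hh'$ and $\Gamma^2_{12} = h'/h$ as the only nonzero ones) and substitute them into the geodesic curvature, using the arclength normalization $r'^2 + h^2\theta'^2 = 1$ to clear the denominator. The only cosmetic difference is that you derive the curvature from $\langle\nabla_{\gamma'}\gamma', J\gamma'\rangle$ where the paper quotes the standard Christoffel-symbol formula for geodesic curvature; your algebra and sign conventions check out.
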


\begin{proof}
In polar coordinates, $\R^2$ with the given metric has first fundamental form
$$\begin{pmatrix} E & F \\ F & G \end{pmatrix}
= \begin{pmatrix} 1 & 0 \\ 0 & h(r)^2 \end{pmatrix}.$$
The curvature of $\gamma$ at $t$ is the geodesic curvature,
which is given by
\begin{multline*}
\kappa(t)=
\sqrt{EG-F^2}
\big[\Gamma^2_{11}r'^3-\Gamma^1_{22}\theta'^3+
\paren{2\Gamma^2_{12}-\Gamma^1_{11}}r'^2\theta'\\
-\paren{2\Gamma^1_{12}-\Gamma^2_{22}}r'\theta'^2
-r''\theta'+\theta''r'\big]
\big/\paren{Er'^2+2Fr'\theta'+G\theta'^2}^{3/2},
\end{multline*}
where $\Gamma^k_{ij}$ are the Christoffel symbols of the second kind.
Since $F=0$,
\begin{align*}
\Gamma^1_{11}&=\frac{E_r}{2E}=0,&
\Gamma^1_{12}&=\frac{E_\theta}{2E}=0,&
\Gamma^1_{22}&=-\frac{G_r}{2E}=-h(r)h'(r), \\
\Gamma^2_{11}&=-\frac{E_\theta}{2G}=0,&
\Gamma^2_{12}&=\frac{G_r}{2G}=\frac{h'(r)}{h(r)},&
\Gamma^2_{22}&=\frac{G_\theta}{2G}=0.
\end{align*}
Therefore
$$\kappa(t)=
\bracket{h(r)^2h'(r)\theta'^3+2h'(r)r'^2\theta'
+h(r)(r'\theta''-\theta'r'')}\big/\paren{r'^2+h(r)^2\theta'^2}^{3/2}.$$
The denominator is 1 due to arclength parametrization,
implying the desired formula.
\end{proof}

By using $\alpha$ (the angle from $\hat{r}(t)$ to $\gamma'(t)$), the curvature formula
can be further simplified:

\begin{prop}
\label{prop:curvaturealpha}
The curvature of $\gamma$ at $t$ is
$$\kappa(t)=\frac{h'(r)}{h(r)}\sin\alpha + \alpha'.$$
\end{prop}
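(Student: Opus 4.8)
The plan is to start from the polar curvature formula in Lemma \ref{lem:curvaturepolar},
$$\kappa(t)=h(r)^2h'(r)\theta'^3+2h'(r)r'^2\theta'+h(r)\paren{r'\theta''-\theta'r''},$$
and rewrite everything in terms of $\alpha$ using the substitutions \eqref{eqn:rthetaalpha}, namely $r'=\cos\alpha$ and $h(r)\theta'=\sin\alpha$. First I would handle the first two terms: factoring out $h'(r)\theta'$ gives $h'(r)\theta'\paren{h(r)^2\theta'^2+2r'^2}$. Substituting $h(r)^2\theta'^2=\sin^2\alpha$ and $r'^2=\cos^2\alpha$, this is $h'(r)\theta'\paren{\sin^2\alpha+2\cos^2\alpha}=h'(r)\theta'\paren{1+\cos^2\alpha}$, and then $h(r)\theta'=\sin\alpha$ turns it into $\dfrac{h'(r)}{h(r)}\sin\alpha\paren{1+\cos^2\alpha}$.

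The remaining piece is the $h(r)\paren{r'\theta''-\theta'r''}$ term, and this is where the main work lies. I would differentiate the two relations in \eqref{eqn:rthetaalpha} with respect to $t$. From $r'=\cos\alpha$ we get $r''=-\sin\alpha\,\alpha'$. From $h(r)\theta'=\sin\alpha$ we get $h'(r)r'\theta'+h(r)\theta''=\cos\alpha\,\alpha'$, so $h(r)\theta''=\cos\alpha\,\alpha'-h'(r)r'\theta'$. Now compute $h(r)r'\theta''=r'\paren{\cos\alpha\,\alpha'-h'(r)r'\theta'}=\cos^2\alpha\,\alpha'-h'(r)r'^2\theta'$ (using $r'=\cos\alpha$), and $h(r)\theta'r''=\theta'\paren{-h(r)\sin\alpha\,\alpha'}$; but $h(r)\theta'=\sin\alpha$, so this is $-\sin^2\alpha\,\alpha'$. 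Subtracting, $h(r)\paren{r'\theta''-\theta'r''}=\cos^2\alpha\,\alpha'-h'(r)r'^2\theta'+\sin^2\alpha\,\alpha'=\alpha'-h'(r)\cos^2\alpha\,\theta'$, and $\theta'=\sin\alpha/h(r)$ gives $\alpha'-\dfrac{h'(r)}{h(r)}\sin\alpha\cos^2\alpha$.

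Finally I would add the two contributions: $\dfrac{h'(r)}{h(r)}\sin\alpha\paren{1+\cos^2\alpha}+\alpha'-\dfrac{h'(r)}{h(r)}\sin\alpha\cos^2\alpha=\dfrac{h'(r)}{h(r)}\sin\alpha+\alpha'$, which is exactly the claimed formula. The main obstacle, such as it is, is purely bookkeeping: keeping the chain-rule differentiation of the constraint $h(r)\theta'=\sin\alpha$ straight (remembering that $h'(r)$ means $dh/dr$ and picks up a factor of $r'$ when differentiated in $t$) and making sure the cancellations between the two groups of terms are carried out correctly. No geometric subtlety is needed beyond Lemma \ref{lem:curvaturepolar} and the identities \eqref{eqn:rthetaalpha}; one should also note implicitly that this computation is valid wherever $\gamma(t)\neq 0$, so that $\hat r$, $\hat\theta$, and hence $\alpha$ are defined.
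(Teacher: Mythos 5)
Your computation is correct and is exactly the ``direct computation'' the paper's proof refers to: substituting $r'=\cos\alpha$, $\theta'=\sin\alpha/h(r)$ into Lemma \ref{lem:curvaturepolar} and differentiating the relations \eqref{eqn:rthetaalpha} to eliminate $r''$ and $\theta''$. You have simply written out the bookkeeping the paper leaves implicit; the approach is the same.
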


\begin{proof}
Recall from (\ref{eqn:rthetaalpha}) that $r'=\cos\alpha$ and $\theta'=\sin\alpha/h(r)$.
The desired formula follows from Lemma \ref{lem:curvaturepolar}
by direct computation.
\end{proof}

The generalized curvature can now be explicitly computed.

\begin{prop}
\label{prop:gencurvatureformula}
The generalized curvature of $\gamma$ at $t$ is
$$\kappa_f(t)=(\log fh)'(r)\sin\alpha + \alpha'.$$
\end{prop}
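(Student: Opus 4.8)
The plan is to start from the decomposition already on the table, $\kappa_f(t)=\kappa(t)+\pdif{\psi}{\nu}(t)$, substitute the formula $\kappa(t)=\frac{h'(r)}{h(r)}\sin\alpha+\alpha'$ from Proposition \ref{prop:curvaturealpha}, and reduce everything to computing the normal derivative $\pdif{\psi}{\nu}$. Since $f=e^\psi$ with $\psi=\psi(r)$ radial and the metric has $g_{rr}=1$, the gradient of $\psi$ with respect to the given metric is simply $\nabla\psi=\psi'(r)\hat r$, so $\pdif{\psi}{\nu}=\inner{\nabla\psi,\nu}=\psi'(r)\inner{\hat r,\nu}$. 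Thus the whole proposition comes down to identifying the $\hat r$-component of the outward unit normal $\nu$.

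Next I would pin down $\nu$ in the orthonormal frame $\{\hat r,\hat\theta\}$. From \eqref{eqn:rthetaalpha} the unit tangent is $\gamma'=\cos\alpha\,\hat r+\sin\alpha\,\hat\theta$. Because $\gamma$ is traversed counterclockwise and $\kappa$ is taken as the inward (leftward) curvature, the inward normal is $\gamma'$ rotated by $+\tfrac{\pi}{2}$, namely $-\sin\alpha\,\hat r+\cos\alpha\,\hat\theta$, and hence the outward unit normal is $\nu=\sin\alpha\,\hat r-\cos\alpha\,\hat\theta$. Therefore $\inner{\hat r,\nu}=\sin\alpha$ and $\pdif{\psi}{\nu}=\psi'(r)\sin\alpha$.

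Combining the pieces, $\kappa_f(t)=\frac{h'(r)}{h(r)}\sin\alpha+\alpha'+\psi'(r)\sin\alpha=\paren{\psi'(r)+\frac{h'(r)}{h(r)}}\sin\alpha+\alpha'$, and since $f=e^\psi$ we have $\psi'+h'/h=(\log f)'+(\log h)'=(\log fh)'$, which gives exactly $\kappa_f(t)=(\log fh)'(r)\sin\alpha+\alpha'$.

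The one place demanding care — the main obstacle, such as it is — is the sign in the normal derivative: one must be consistent about the counterclockwise parametrization, the "inward/leftward" convention for $\kappa$ used in Section \ref{sec:congencurve}, and the "outward" normal appearing in $\pdif{\psi}{\nu}$ in Proposition \ref{prop:FirstVariation}, so that the contribution is $+\psi'\sin\alpha$ rather than $-\psi'\sin\alpha$. A sanity check is the circle $r\equiv r_0$ centered at the origin, where $\alpha=\pi/2$, $\alpha'=0$, and the generalized curvature should be $(\log fh)'(r_0)$, matching the claimed formula; this fixes the sign unambiguously. Everything else is the routine substitution and the identity $(\log fh)'=(\log f)'+(\log h)'$.
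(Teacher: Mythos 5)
Your proposal is correct and follows essentially the same route as the paper: both reduce to showing $\pdif{\psi}{\nu}=\psi'(r)\sin\alpha$ by computing $\nabla\psi=\psi'(r)\hat r$ and identifying the outward normal as $\nu=\sin\alpha\,\hat r-\cos\alpha\,\hat\theta$ (the paper writes it as $h(r)\theta'\hat r-r'\hat\theta$, which is the same by \eqref{eqn:rthetaalpha}), then combining with Proposition \ref{prop:curvaturealpha}. Your extra care with the orientation conventions and the circle sanity check are welcome but not a different argument.
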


Note that $f$ and $h$ are functions of $r$
but $\alpha$ is a function of $t$.

\begin{proof}
By Proposition \ref{prop:curvaturealpha} and the definition of
generalized curvature, it suffices to prove that
$$\pdif{\psi}{v}=\frac{f'(r)}{f(r)}\sin\alpha=\psi'(r)\sin\alpha.$$
The gradient of $\psi$ is
\begin{align*}
\nabla \psi &= \pdif{\psi}{r}\hat{r}+\frac{1}{h(r)}\pdif{\psi}{\theta}\hat{\theta}
=\psi'(r)\hat{r}
\end{align*}
because $\psi$ is radial. The unit outward normal is
\begin{align*}
\nu &= h(r)\theta'\hat{r}-r'\hat{\theta}.
\end{align*}
Hence
$$\pdif{\psi}{v}=\inner{\nabla \psi,\nu}=\psi'(r)h(r)\theta'=\psi'(r)\sin\alpha,$$
as asserted.
\end{proof}

From spherical symmetrization, some properties of $\alpha$ can be deduced.

\begin{lemma}
\label{lem:alphaproperty}
Assuming $\gamma$ avoids the origin, the angle $\alpha$ satisfies 
\begin{align*}
\alpha(0) &= \pi/2, \\
\alpha(-\beta) &= \alpha(\beta)=\pi/2 \text{ or } 3\pi/2, \text{ and } \\
\pi/2 & \leq \alpha(t) \leq 3\pi/2,
\end{align*} 
for all $t\in [0,\beta]$.
\end{lemma}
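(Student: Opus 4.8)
The plan is to deduce each assertion from the geometry forced by spherical symmetrization, using the description of $\gamma$ given just before the lemma. Recall that $A$ is spherically symmetrized, $\gamma$ is the arclength parametrization of the outermost boundary component traversed counterclockwise from the leftmost point on the positive $x$-axis back to itself, $\gamma$ is symmetric about the $x$-axis, lies above it on $(0,\beta)$, below it on $(-\beta,0)$, and $\gamma(0),\gamma(\pm\beta)$ lie on the $x$-axis. Throughout I use $r'=\cos\alpha$ and $h(r)\theta'=\sin\alpha$ from \eqref{eqn:rthetaalpha}, so that the sign of $\theta'$ records whether $\gamma$ is moving counterclockwise ($\sin\alpha>0$) or clockwise ($\sin\alpha<0$) in the ambient angular coordinate, while the sign of $r'$ records whether $\gamma$ is moving outward or inward.

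First I would pin down $\alpha(0)$. At $t=0$ the point $\gamma(0)$ lies on the positive $x$-axis and, by the symmetry of $\gamma$ about the $x$-axis together with smoothness, the tangent $\gamma'(0)$ must be perpendicular to the $x$-axis (a smooth curve symmetric about a line crosses that line orthogonally, unless it is tangent to it, but tangency is excluded because $\gamma$ lies strictly on one side for $t$ slightly positive and the other for $t$ slightly negative). Since $\gamma$ is above the axis for small $t>0$ and is traversed counterclockwise, $\gamma'(0)$ points in the $+\hat\theta$ direction, i.e. $\sin\alpha(0)=1$, $\cos\alpha(0)=0$, forcing $\alpha(0)=\pi/2$. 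The same reasoning at the endpoints: $\gamma(\pm\beta)$ lie on the negative $x$-axis (the leftmost point), again $\gamma'$ is vertical there, so $\cos\alpha(\pm\beta)=0$ and $\alpha(\pm\beta)\in\{\pi/2,3\pi/2\}$; the value may differ from $\pi/2$ because at the leftmost point the curve could be turning either way, and by symmetry $\alpha(\beta)=\alpha(-\beta)$.

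The substantive part is the bound $\pi/2\le\alpha(t)\le 3\pi/2$ on $[0,\beta]$, equivalently $r'=\cos\alpha\le 0$: the curve is (weakly) radially nonincreasing along $[0,\beta]$. Here I would invoke the defining property of spherical symmetrization. After symmetrization, for each radius $\rho$ the slice $A\cap\partial B(\rho)$ is a single arc centered on the positive $x$-axis, and the ``radial profile'' — the angular half-width of $A$ at radius $\rho$ — is a nonincreasing function of $\rho$ on the relevant range (this is exactly what symmetrization of an isoperimetric region buys us, cf. Chambers \cite[Sect. 2]{Ch}). The outermost boundary curve $\gamma$ is the graph of this profile, so as $t$ increases from $0$ to $\beta$ the angle sweeps monotonically while the radius cannot increase; hence $r'(t)\le 0$, i.e. $\cos\alpha(t)\le 0$. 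Combined with $\sin\alpha(t)\ge 0$ (the curve stays in the upper half plane on $(0,\beta)$, so $\theta'\ge 0$, so $\sin\alpha\ge 0$), this confines $\alpha(t)$ to $[\pi/2,3\pi/2]$.

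The main obstacle I anticipate is making the ``radius is nonincreasing along $\gamma$'' step fully rigorous rather than merely intuitive: one must argue that the symmetrized set's outermost boundary component really is a monotone radial graph over the angular interval, handling possible vertical segments (where $\cos\alpha=0$ on an interval) and the behavior where $\gamma$ might otherwise fold back. I would address this by using that $\gamma$ is the \emph{most distant} boundary component and that if $r'$ were positive on some subinterval then, by the single-arc structure of each symmetrized slice, $\gamma$ would have to re-enter a radius it already passed, contradicting either the symmetrization (two arcs at one radius) or the choice of $\gamma$ as the outermost component. The endpoint and midpoint claims are comparatively routine once the orthogonal-crossing observation is in place. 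I would also note explicitly that the hypothesis ``$\gamma$ avoids the origin'' is needed only so that $\hat r(t),\hat\theta(t)$ and hence $\alpha(t)$ are defined everywhere on $[-\beta,\beta]$.
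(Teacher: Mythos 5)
Your proof follows essentially the same route as the paper's: everything reduces to $\cos\alpha=r'\le 0$ on $[0,\beta]$ (from spherical symmetrization) together with $r'(0)=r'(\beta)=0$, and your orthogonal-crossing argument at the axis is a legitimate variant of the paper's observation that $r(0)$ and $r(\beta)$ are respectively the maximum and minimum of $r$ along $\gamma$. However, two of your supporting claims are false, even though the conclusion survives without them. First, spherical symmetrization does \emph{not} make the angular half-width $\rho\mapsto\theta_A(\rho)$ a nonincreasing function; what it gives is that each slice $A\cap\partial B(\rho)$ is a single arc centered on the positive $x$-axis, so that $\partial A$ meets each circle $\partial B(\rho)$ in at most one point of the open upper half-plane. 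Hence $r$ is injective, therefore monotone, therefore nonincreasing (since $r(0)$ is the maximum) along $\gamma|_{(0,\beta)}$ — your fallback ``re-enter a radius already passed'' argument is the correct mechanism; the monotone-profile claim is not. Second, ``$\gamma$ stays in the upper half-plane, so $\theta'\ge0$, so $\sin\alpha\ge0$'' is a non sequitur and is false in general: the profile can oscillate angularly while $r$ decreases, and indeed $\alpha\le\pi$ is exactly the content of the later Lemma \ref{lem:alphaleqpi}, which requires additional hypotheses ($fh$ nondecreasing at $r(0)$ and the origin interior to $\gamma$). Fortunately $\cos\alpha\le0$ alone, together with continuity and $\alpha(0)=\pi/2$, already confines $\alpha$ to $[\pi/2,3\pi/2]$, so the superfluous $\sin\alpha\ge0$ step should simply be deleted.
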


\begin{proof}
From spherical symmetrization, $\cos\alpha=r'(t)\leq 0$ for all $t\in [0,\beta]$,
implying the third assertion. Because $r(0)$ is maximum,
$\cos\alpha(0)=r'(0)=0$. So $\alpha(0)=\pi/2$ because $\gamma$ has
counter-clockwise parametrization. The second assertion follows from
the fact that $r(\beta)$ is minimum, so $\cos\alpha(\beta)=r'(\beta)=0$.
\end{proof}

\begin{remark}
\label{rem:anycomponent}
The results of this section hold for any component of an
isoperimetric region, not only for the farthest component $\gamma$.
Moreover, by Proposition \ref{prop:FirstVariation},
the generalized curvature (Prop. \ref{prop:gencurvatureformula})
of each component has to be equal.
\end{remark}

\section{Circles Isoperimetric}
\label{sec:logconvexfh}
In this section, with the assumption that the product $fh$ of the density and the metric factor is eventually log-convex, we will prove that
for large volume, an isoperimetric curve whose farthest component $\gamma$ encloses the origin is a circle centered at the origin.
The notation carries over from Section \ref{sec:congencurve}. In particular, $\alpha(t)$ denotes the angle from the radial to the tangent at $\gamma(t)$.
First we need a lemma.

\begin{lemma}
\label{lem:gamma'0}
If $\alpha'(0)=0$, then $\gamma$ is a circle centered at the origin.
\end{lemma}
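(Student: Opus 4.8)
The plan is to reduce $\gamma$ to the trajectory of an autonomous first‑order system and then invoke uniqueness of solutions. Since $A$ is isoperimetric, the generalized curvature is constant along $\gamma$, say $\kappa_f\equiv C$ (Proposition \ref{prop:FirstVariation}). By Proposition \ref{prop:gencurvatureformula} together with $r'=\cos\alpha$ from \eqref{eqn:rthetaalpha}, the pair $(r(t),\alpha(t))$ satisfies
$$r'=\cos\alpha,\qquad \alpha'=C-(\log fh)'(r)\sin\alpha$$
at every $t$ for which $\gamma(t)$ is not the origin (so that $\hat r,\hat\theta$, and hence $\alpha$, are defined); note $\theta$ decouples and plays no role.

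Next I would evaluate at $t=0$. The point $\gamma(0)$ is the farthest point of its component from the origin, so $r$ has an interior maximum there and $r'(0)=\cos\alpha(0)=0$; since $r(0)>0$ the radial frame is defined at $\gamma(0)$, and the counter‑clockwise orientation forces $\alpha(0)=\pi/2$, exactly as in the proof of Lemma \ref{lem:alphaproperty}. Substituting into the $\alpha$‑equation gives $\alpha'(0)=C-(\log fh)'(r(0))$, so the hypothesis $\alpha'(0)=0$ is precisely the statement $C=(\log fh)'(r(0))$.

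Now consider the constant pair $(\bar r(t),\bar\alpha(t))\equiv(r(0),\pi/2)$. It solves the system — $\cos(\pi/2)=0$ and $C-(\log fh)'(r(0))\sin(\pi/2)=C-(\log fh)'(r(0))=0$ — with the same initial data $(r(0),\pi/2)$ as $(r,\alpha)$. Assuming $f$ and $h$ are regular enough that the right‑hand side is locally Lipschitz in $(r,\alpha)$ (which holds as soon as $fh$ is, say, $C^2$), uniqueness for ODEs forces $(r(t),\alpha(t))\equiv(r(0),\pi/2)$ on the maximal subinterval of $[-\beta,\beta]$ about $0$ on which the system is valid; since $r\equiv r(0)>0$ there, $\gamma$ never reaches the origin, so the system remains valid and the identity persists on all of $[-\beta,\beta]$. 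Thus $r$ is constant along $\gamma$, and (with $\theta'=1/h(r(0))$ constant, so $\gamma$ traces the full level set $\{r=r(0)\}$) $\gamma$ is a circle centered at the origin. The only delicate point is the regularity needed for the uniqueness step — $(\log fh)'$ must be locally Lipschitz — which should be folded into the standing hypotheses or checked directly; everything else is substitution.
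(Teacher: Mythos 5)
Your proposal is correct and is essentially the paper's own argument: the paper also treats the constant generalized curvature equation as an ODE, observes that the circle through $\gamma(0)$ centered at the origin satisfies it with the same initial data $\alpha(0)=\pi/2$, $\alpha'(0)=0$, and concludes by uniqueness of solutions. You have merely made explicit the first-order system in $(r,\alpha)$, the identification $C=(\log fh)'(r(0))$, and the regularity needed for uniqueness, all of which the paper leaves implicit.
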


\begin{proof}
Notice that a circle centered at the origin satisfies the
constant generalized curvature equation (Prop. \ref{prop:gencurvatureformula})
and has $\alpha(0)=\pi/2$
and $\alpha'(0)=0$. Therefore,
by the uniqueness of solutions of ODEs, $\gamma$ is a circle
centered at the origin.
\end{proof}

The next lemma shows that the fact that $\gamma$ is a circle
about the origin is enough to conclude
that an isoperimetric curve has only one component.

\begin{lemma}
\label{lem:onecomponent}
Suppose that $fh$ has positive derivatives.
If the farthest component $\gamma$ of an isoperimetric curve
is a circle centered at the origin, then the whole isoperimetric curve
is that circle centered at the origin.
\end{lemma}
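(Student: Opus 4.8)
The plan is to argue by contradiction: suppose $\gamma$ is a circle centered at the origin but some other component $\gamma_1$ of the isoperimetric curve is not. The key fact I would exploit is Remark \ref{rem:anycomponent}, which says every component has the same constant generalized curvature $\kappa_f$, together with the explicit formula $\kappa_f = (\log fh)'(r)\sin\alpha + \alpha'$ from Proposition \ref{prop:gencurvatureformula}. Evaluating this on the circle $\gamma$ at radius $r_0$, where $\alpha \equiv \pi/2$ and $\alpha' \equiv 0$, gives $\kappa_f = (\log fh)'(r_0)$. So the constant for the whole curve is $(\log fh)'(r_0)$, and since $fh$ has positive derivatives, $(\log fh)'$ is positive, so $\kappa_f > 0$.

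Next I would analyze the other component $\gamma_1$. Since the set has been spherically symmetrized and $\gamma$ is the \emph{farthest} component, $\gamma_1$ lies inside $B(r_0)$, so along $\gamma_1$ we have $r(t) \le r_0$ at every point, with equality impossible in the interior (or at most at isolated points) because $\gamma_1$ is a distinct component not reaching the farthest sphere. Apply Lemma \ref{lem:alphaproperty} to $\gamma_1$ (valid for any component, assuming it avoids the origin — a case I would need to address separately, perhaps by noting the generalized curvature formula degenerates or by using that a component through the origin forces a contradiction with constancy of $\kappa_f$): at the point of $\gamma_1$ farthest from the origin, $\alpha = \pi/2$ and $\alpha' = 0$ there since $r$ is maximized along $\gamma_1$. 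But then $\kappa_f = (\log fh)'(r_1)\sin(\pi/2) + 0 = (\log fh)'(r_1)$ where $r_1 < r_0$ is that maximal radius. Since $fh$ has positive second derivative — wait, here I only have positive first derivative of $fh$, so $(\log fh)'$ need not be monotone. Let me instead use: at that farthest point of $\gamma_1$, the curve is locally inside the circle of radius $r_1$, so its inward curvature there is at least that of the coordinate circle... Actually the cleaner route: by Lemma \ref{lem:gamma'0}'s reasoning, $\alpha'(0) = 0$ and $\alpha(0) = \pi/2$ at the farthest point of $\gamma_1$ forces $\gamma_1$ to be a circle centered at the origin by ODE uniqueness. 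Then $\gamma_1$ is a circle of radius $r_1 < r_0$ with $\kappa_f = (\log fh)'(r_1) = (\log fh)'(r_0)$.

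So the contradiction must come from the volume/perimeter optimality rather than curvature alone when $(\log fh)'$ happens to take the same value at $r_1$ and $r_0$. I would argue: two concentric circles bounding an annulus (plus whatever $\gamma$ encloses) cannot be isoperimetric — one can reduce perimeter while fixing volume by sliding the inner circle, since the first variation of the annulus configuration with respect to moving $\gamma_1$ radially is governed by $\kappa_f$ on each boundary circle and the geometry forces a strict decrease unless the configuration is a single disk. Concretely, deleting the inner disk component and re-growing the outer circle keeps volume fixed but changes perimeter by an amount computable from $\kappa_f$ and the monotonicity of $fh$; positivity of $(fh)'$ should make this strictly favorable. The main obstacle I anticipate is precisely this last step — ruling out the "same $\kappa_f$ at two radii" coincidence — and handling the case where a component passes through the origin, where $\hat r, \hat\theta, \alpha$ are undefined and a separate small argument (e.g., a direct comparison replacing that component by a small circle, or invoking that a disk about the origin is always preferable) is needed.
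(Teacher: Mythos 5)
There is a genuine gap, and it sits exactly where you flag it: you never actually rule out the extra components. The paper closes the argument with a single orientation observation that your proposal misses. Since $\gamma$ is the \emph{farthest} boundary component and is a circle about the origin traversed counterclockwise, the region lies just inside $\gamma$; consequently, if any further boundary component exists it lies inside the disk, and the first one encountered going inward bounds a \emph{hole} of the region and is therefore traversed clockwise (region on the left). At the farthest point of such a component $\bar\gamma$ one has $\sin\alpha=-1$ and, because $r$ is maximized there, $r''=-\sin\alpha\,\alpha'=\alpha'\le 0$; so by Proposition \ref{prop:gencurvatureformula} its generalized curvature is at most $-(\log fh)'(r)<0$, while $\gamma$ has generalized curvature $(\log fh)'(r_0)>0$. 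This contradicts constancy of the generalized curvature (Prop. \ref{prop:FirstVariation}, Remark \ref{rem:anycomponent}) outright: no volume/perimeter comparison, no ``sliding the inner circle,'' and no separate treatment of the origin is needed, since the farthest point of $\bar\gamma$ is never the origin.

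Your intermediate step is also not sound as written. For a counterclockwise inner component you assert that $\alpha'=0$ at its farthest point and then invoke ODE uniqueness to conclude it is a concentric circle. But at a maximum of $r$ one only gets $r'=\cos\alpha=0$ and $r''=-\sin\alpha\,\alpha'\le 0$, i.e.\ $\alpha'\ge 0$ when $\sin\alpha=1$; the vanishing of $\alpha'$ is a \emph{hypothesis} of Lemma \ref{lem:gamma'0}, not a consequence of being at the farthest point of an arbitrary component. Even granting two concentric circles on which $(\log fh)'$ happens to agree, the comparison argument you sketch (deleting the inner disk and regrowing the outer circle) is not carried out and, as you note yourself, is the main obstacle. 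The orientation argument bypasses all of this, which is why the hypothesis that $fh$ has positive derivatives (hence $(\log fh)'>0$) is all the lemma needs.
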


\begin{proof}
The isoperimetric curve cannot have other components outside of
its farthest component $\gamma$
because $\gamma$ is a circle about the origin.
Suppose that there are other components inside $\gamma$; then some component $\bar{\gamma}$ must have clockwise orientation. 
By Proposition \ref{prop:gencurvatureformula} and the hypothesis on $fh$, $\gamma$ has positive generalized curvature. 
Similarly, by Remark \ref{rem:anycomponent}, the oppositely-oriented $\bar{\gamma}$ has negative generalized curvature. This contradicts the fact that an isoperimetric curve has constant generalized curvature (Prop. \ref{prop:FirstVariation}).
Therefore $\gamma$ is the whole isoperimetric curve.
\end{proof}

The following proposition shows that if $fh$ is eventually log-convex
and $\gamma$ is not a circle
centered at the origin, then it must go near the origin when it crosses the $x$-axis at $r(\beta)$. Recall that $r(t)$ is the distance from the origin to $\gamma(t)$.

\begin{prop}
\label{prop:fheventuallylogconvex}
If $fh$ is log-convex on the interval $[r_0,\infty)$
and the origin is interior to $\gamma$,
then either $\gamma$ is a circle centered at the origin or $r(\beta)< r_0$.
\end{prop}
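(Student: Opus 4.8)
The plan is to argue by contradiction: suppose $\gamma$ is not a circle centered at the origin and $r(\beta) \geq r_0$, and derive a contradiction using the sign of the generalized curvature together with the behavior of $\alpha$ at the endpoints. Since $fh$ has positive derivative at least on $[r_0,\infty)$ (being log-convex there means $(\log fh)'$ is nondecreasing, but I will want positivity; this is where I would lean on the standing hypothesis that $fh$ has positive derivatives, and in any case $(\log fh)'(r) > 0$ for $r \geq r_0$ whenever $fh$ is log-convex and, say, tends to infinity — more carefully, I should check whether log-convexity alone on $[r_0,\infty)$ forces $(\log fh)' \geq 0$ there, which it does not in general, so I would either add the positivity of $(fh)'$ from context or restrict attention to the part of the curve where it holds). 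By Proposition \ref{prop:gencurvatureformula}, the constant generalized curvature $\kappa_f$ equals $(\log fh)'(r)\sin\alpha + \alpha'$, and evaluating at $t = 0$ where $\alpha(0) = \pi/2$ and, by Lemma \ref{lem:gamma'0}, $\alpha'(0) \neq 0$ (else $\gamma$ is a circle), we learn the sign of $\kappa_f$.

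First I would pin down the sign of $\alpha'(0)$. By Lemma \ref{lem:alphaproperty}, $\pi/2 \leq \alpha(t) \leq 3\pi/2$ on $[0,\beta]$ with $\alpha(0) = \pi/2$, so $\alpha$ cannot decrease initially; hence $\alpha'(0) \geq 0$, and since it is nonzero, $\alpha'(0) > 0$. Then at $t=0$, $\kappa_f = (\log fh)'(r(0)) \cdot 1 + \alpha'(0) > 0$, provided $(\log fh)'(r(0)) \geq 0$ — and here $r(0)$ is the \emph{largest} radius attained, so if $r(\beta) \geq r_0$ then certainly $r(0) \geq r_0$ and log-convexity on $[r_0,\infty)$ applies; I still need $(\log fh)' \geq 0$ somewhere, which again is where positivity of $(fh)'$ enters. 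So $\kappa_f > 0$.

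Next I would look at the endpoint $t = \beta$, where $\gamma$ crosses the $x$-axis at radius $r(\beta)$. By Lemma \ref{lem:alphaproperty}, $\alpha(\beta) \in \{\pi/2, 3\pi/2\}$. Since the origin is interior to $\gamma$, the curve must wrap around the origin, and a careful look at the symmetrized picture shows $r(\beta)$ is the minimum radius with the curve passing to the far side of the origin, forcing $\alpha(\beta) = 3\pi/2$ (the tangent points in the $-\hat r$, i.e.\ inward-then-around, sense); this is the geometric input I would justify from spherical symmetrization and the origin being enclosed. Then $\sin\alpha(\beta) = -1$, and the constant curvature equation gives $\kappa_f = -(\log fh)'(r(\beta)) + \alpha'(\beta)$. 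Now if $r(\beta) \geq r_0$, log-convexity gives $(\log fh)'(r(\beta)) \geq (\log fh)'(r(0))$ since $r(\beta) \leq r(0)$... wait, that inequality runs the wrong way — $(\log fh)'$ is \emph{nondecreasing} so $(\log fh)'(r(\beta)) \leq (\log fh)'(r(0))$. I would instead combine: $\kappa_f = -(\log fh)'(r(\beta)) + \alpha'(\beta) = (\log fh)'(r(0)) + \alpha'(0)$, so $\alpha'(\beta) = (\log fh)'(r(0)) + (\log fh)'(r(\beta)) + \alpha'(0) > 0$ (all three terms nonnegative, the last strictly positive), meaning $\alpha$ is still increasing at $\beta$ — but $\alpha(\beta) = 3\pi/2$ is the maximum allowed value on $[0,\beta]$, and $\alpha \leq 3\pi/2$ throughout, so $\alpha$ cannot be strictly increasing as it arrives at its maximum from below at the right endpoint. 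That is the contradiction.

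The main obstacle I anticipate is the geometric claim that $\alpha(\beta) = 3\pi/2$ when the origin is enclosed (as opposed to $\pi/2$): this requires a clean argument from spherical symmetrization that the farthest component, traversed counterclockwise, must rotate its tangent vector by the full amount needed to loop around the origin, so that by the time it returns to the $x$-axis at the near side the angle from the radial direction has advanced to $3\pi/2$ rather than returning to $\pi/2$. A secondary issue is the sign hypothesis on $(\log fh)'$: log-convexity on $[r_0,\infty)$ gives monotonicity of $(\log fh)'$ but not its sign, so I would either invoke the paper's running assumption that $fh$ has positive derivatives (so $(\log fh)' > 0$ everywhere) or phrase the estimate using only monotonicity of $(\log fh)'$ by comparing $\alpha'$ at $0$ and $\beta$ directly — the computation $\alpha'(\beta) - \alpha'(0) = 2(\log fh)'(r(0))\cdot(\text{something})$ may need the sign after all. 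I would resolve this by checking the precise statement of the ambient hypotheses and, if necessary, noting that the proposition is only applied in the setting of Theorem \ref{MainTheorem} where $(fh)' > 0$ is assumed.
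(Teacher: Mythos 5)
Your argument hinges on the claim that when the origin is interior to $\gamma$ the endpoint angle is $\alpha(\beta)=3\pi/2$, and this is exactly backwards. The point $\gamma(\pm\beta)$ is the leftmost point of the component; if the origin is enclosed, that point lies on the \emph{negative} $x$-axis, where the outward radial vector $\hat{r}$ points in the $-x$ direction while the counterclockwise tangent points in the $-y$ direction, so the counterclockwise angle from $\hat{r}$ to $\gamma'(\beta)$ is $\pi/2$, not $3\pi/2$. (It is a component \emph{not} containing the origin, whose leftmost point sits on the positive $x$-axis, that has $\alpha(\beta)=3\pi/2$; the paper uses $\alpha(\beta)=\pi/2$ explicitly in this proposition and again in Lemma \ref{lem:alphaleqpi}.) With $\sin\alpha(\beta)=+1$ rather than $-1$, the equation you write at $t=\beta$ and the contradiction built on it disappear. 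Moreover, even granting your premise, your final step is not a contradiction: if $\alpha\leq 3\pi/2$ on $[0,\beta]$ and $\alpha(\beta)=3\pi/2$, then $\alpha$ attains its maximum at the \emph{right} endpoint, which forces the left derivative there to be $\geq 0$; a strictly positive $\alpha'(\beta)$ is entirely consistent with arriving at the maximum from below.

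The correct route, which is the paper's, uses $\alpha(\beta)=\pi/2$: since $\alpha\geq\pi/2$ on $[0,\beta]$ with equality at both endpoints (Lemma \ref{lem:alphaproperty}), one gets $\alpha'(0)\geq 0$ and $\alpha'(\beta)\leq 0$. Equating the constant generalized curvature (Prop. \ref{prop:gencurvatureformula}) at $t=0$ and $t=\beta$, where $\sin\alpha=1$ at both, gives $(\log fh)'(r(0))+\alpha'(0)=(\log fh)'(r(\beta))+\alpha'(\beta)$; since $r(0)\geq r(\beta)\geq r_0$ and $(\log fh)'$ is nondecreasing on $[r_0,\infty)$, this yields $\alpha'(0)\leq\alpha'(\beta)$, hence $\alpha'(0)=\alpha'(\beta)=0$, and Lemma \ref{lem:gamma'0} gives the circle. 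Note that this argument uses only the monotonicity of $(\log fh)'$ coming from log-convexity and never its sign, so the secondary issue you flag about positivity of $(fh)'$ does not arise in the actual proof.
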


\begin{proof}
Suppose that $r(\beta)\geq r_0$. We must show that $\gamma$ is a circle
centered at the origin.
Since $\gamma$ encloses the origin, Lemma \ref{lem:alphaproperty} applies, $\alpha(\beta)=\pi/2$, and $\pi/2 \leq \alpha(t) \leq 3\pi/2$ for all $t\in[0,\beta]$. Hence
$\alpha'(0)\geq 0$ and $\alpha'(\beta)\leq 0$.
The generalized curvature formula (Prop. \ref{prop:gencurvatureformula})
implies that
$$(\log fh)'(r(0))+\alpha'(0)=(\log fh)'(r(\beta))+\alpha'(\beta).$$
By spherical symmetrization, $r(0)\geq r(\beta)\geq r_0$, so by log-convexity of $fh$,
$$(\log fh)'(r(0))\geq (\log fh)'(r(\beta)).$$
This implies that $\alpha'(0)\leq \alpha'(\beta)$, so that $\alpha'(0)=\alpha'(\beta)=0$.
Hence by Lemma \ref{lem:gamma'0}, $\gamma$ is a circle
centered at the origin.
\end{proof}

\begin{lemma}
\label{lem:alphaleqpi}
If $fh$ is nondecreasing at $r(0)$
and the origin is interior to $\gamma$, then
$\alpha(t)\in[\pi/2,\pi]$ for all $t\in[0,\beta]$.
\end{lemma}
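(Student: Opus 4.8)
The plan is to get the lower bound $\alpha\ge\pi/2$ for free and to prove the upper bound $\alpha\le\pi$ by contradiction. Since the origin is interior to $\gamma$, the curve avoids the origin, so Lemma~\ref{lem:alphaproperty} applies and gives $\alpha(0)=\pi/2$, $\alpha(\beta)\in\{\pi/2,3\pi/2\}$, and $\pi/2\le\alpha(t)\le3\pi/2$ on $[0,\beta]$; because $\gamma$ encloses the origin the value $3\pi/2$ is ruled out, so $\alpha(\beta)=\pi/2$, exactly as in the proof of Proposition~\ref{prop:fheventuallylogconvex}. Hence only $\alpha(t)\le\pi$ remains. The key observation is that at any point where $\alpha=\pi$ the density term in the generalized curvature formula (Prop.~\ref{prop:gencurvatureformula}) drops out because $\sin\pi=0$, so the constant value $\kappa_f$ can be read off purely from $\alpha'$ there; pairing this with the evaluation at $t=0$, where $\sin(\pi/2)=1$, will force $\alpha'(0)=0$ and then Lemma~\ref{lem:gamma'0} closes the argument.

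First I would note that $\alpha'(0)\ge0$: $\alpha(0)=\pi/2$ is the minimum value of $\alpha$ on $[0,\beta]$, attained at the left endpoint. Now suppose for contradiction that $\alpha(t_0)>\pi$ for some $t_0$; since $\alpha(0)=\alpha(\beta)=\pi/2<\pi$ we have $t_0\in(0,\beta)$. By continuity and the intermediate value theorem, $a:=\sup\{t<t_0:\alpha(t)=\pi\}$ and $b:=\inf\{t>t_0:\alpha(t)=\pi\}$ satisfy $0<a<t_0<b<\beta$, with $\alpha(a)=\alpha(b)=\pi$ and $\alpha>\pi$ on $(a,b)$. Consequently $\alpha'(a)\ge0$ and $\alpha'(b)\le0$. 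Evaluating Proposition~\ref{prop:gencurvatureformula} at $a$ and at $b$ and using $\sin\pi=0$ gives $\kappa_f=\alpha'(a)$ and $\kappa_f=\alpha'(b)$; since $\kappa_f$ is constant (Prop.~\ref{prop:FirstVariation}), $0\le\alpha'(a)=\kappa_f=\alpha'(b)\le0$, so $\kappa_f=0$.

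Finally I would evaluate Proposition~\ref{prop:gencurvatureformula} at $t=0$: since $\sin(\pi/2)=1$ this reads $0=\kappa_f=(\log fh)'(r(0))+\alpha'(0)$. The hypothesis that $fh$ is nondecreasing at $r(0)$ gives $(\log fh)'(r(0))\ge0$ (as $fh>0$), and we already have $\alpha'(0)\ge0$, so both summands vanish; in particular $\alpha'(0)=0$. By Lemma~\ref{lem:gamma'0}, $\gamma$ is a circle centered at the origin, hence $\alpha\equiv\pi/2$, contradicting $\alpha(t_0)>\pi$. Therefore $\alpha(t)\le\pi$ for all $t\in[0,\beta]$, which together with $\alpha\ge\pi/2$ completes the proof. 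The one point needing care is the bookkeeping in the middle step: the generalized curvature relation should only ever be evaluated at radii where $\sin\alpha$ equals $0$ or $1$, so that no information about $(\log fh)'$ away from $r(0)$ — in particular no log-convexity of $fh$ — is needed, only monotonicity of $fh$ at the single radius $r(0)$.
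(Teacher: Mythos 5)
Your proposal is correct and follows essentially the same route as the paper: both arguments locate two points where $\alpha=\pi$ (so $\sin\alpha=0$ kills the density term), deduce $\kappa_f=0$ from the sign conditions on $\alpha'$ there, evaluate the curvature formula at $t=0$ to force $\alpha'(0)=0$, and invoke Lemma~\ref{lem:gamma'0} for the contradiction. Your explicit sup/inf construction of the crossing points and the remark that only monotonicity of $fh$ at $r(0)$ is used are just slightly more careful versions of what the paper does.
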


\begin{proof}
Suppose to the contrary that $\alpha(t)>\pi$ for some
$t\in[0,\beta]$. By Lemma \ref{lem:alphaproperty},
$\alpha(0)=\pi/2$ and, because $\gamma$ encloses the origin, $\alpha(\beta)=\pi/2$.
Thus there are $t_0<t<t_1$ such that
$\alpha(t_0)=\alpha(t_1)=\pi$, $\alpha'(t_0)\geq 0$, and $\alpha'(t_1) \leq 0$.
If $\gamma$ has constant generalized curvature $c$, then by the generalized curvature
formula (Prop. \ref{prop:gencurvatureformula})
$$c=\alpha'(t_0)=\alpha'(t_1),$$
so all three quantities have to be zero. So at $t=0$,
$$0=(\log fh)'(r(0))+\alpha'(0).$$
The first term on the right-hand side is nonnegative by hypothesis,
and the second term is nonnegative because $\alpha'(0)\geq 0$ (Lemma \ref{lem:alphaproperty}).
Therefore $\alpha'(0)=0$.
By Lemma \ref{lem:gamma'0}, $\gamma$ is a circle centered at the origin,
and $\alpha(t)=\pi/2$ for all $t\in[0,\beta]$, a contradiction.
Hence $\alpha(t)\leq \pi$ for all $t\in[0,\beta]$.
\end{proof}

We now show that $\alpha$ is nondecreasing (Lemma \ref{lem:alphaincrease})
and that its rate of increase is accelerating (Lemma \ref{lem:alphaaccel})
in the region where $fh$ is log-convex.

\begin{lemma}
\label{lem:alphaincrease}
If $fh$ is nondecreasing and log-convex on the interval $[r_0,\infty)$,
and the origin is interior to $\gamma$, then for any $t\in[0,\beta]$ such that
$r(t)\geq r_0$, $\alpha'(t)\geq 0$.
\end{lemma}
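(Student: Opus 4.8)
The plan is to read $\alpha'$ off the constant generalized curvature equation and to estimate the single term it competes against. Writing $c$ for the (constant) generalized curvature of $\gamma$, Proposition \ref{prop:gencurvatureformula} gives, for every $t\in[0,\beta]$,
$$\alpha'(t)=c-(\log fh)'(r(t))\sin\alpha(t),$$
so it suffices to show $(\log fh)'(r(t))\sin\alpha(t)\leq c$ whenever $r(t)\geq r_0$.

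First I would record the consequences of spherical symmetrization. Along $\gamma$ on $[0,\beta]$ the radius is nonincreasing ($r'=\cos\alpha\leq 0$) with maximum $r(0)$, so any $t$ with $r(t)\geq r_0$ automatically satisfies $r_0\leq r(t)\leq r(0)$; in particular $fh$ is nondecreasing at $r(0)$, so Lemma \ref{lem:alphaleqpi} applies and $\alpha(s)\in[\pi/2,\pi]$, hence $0\leq\sin\alpha(s)\leq 1$, for all $s\in[0,\beta]$. Moreover, $fh$ nondecreasing on $[r_0,\infty)$ gives $(\log fh)'\geq 0$ there, and $fh$ log-convex on $[r_0,\infty)$ gives that $(\log fh)'$ is nondecreasing there.

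Next I would evaluate the displayed identity at $t=0$: since $\alpha(0)=\pi/2$ and $\alpha'(0)\geq 0$ (Lemma \ref{lem:alphaproperty}),
$$c=(\log fh)'(r(0))+\alpha'(0)\geq (\log fh)'(r(0)).$$
Then, for the given $t$ with $r(t)\geq r_0$, I would combine $0\leq\sin\alpha(t)\leq 1$ with $0\leq(\log fh)'(r(t))\leq(\log fh)'(r(0))$ (the upper bound because $r(t)\leq r(0)$ and $(\log fh)'$ is nondecreasing on $[r_0,\infty)$) to obtain
$$(\log fh)'(r(t))\sin\alpha(t)\leq(\log fh)'(r(t))\leq(\log fh)'(r(0))\leq c,$$
whence $\alpha'(t)=c-(\log fh)'(r(t))\sin\alpha(t)\geq 0$, as claimed.

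There is no serious obstacle here. The only point needing care is that the chain $(\log fh)'(r(t))\sin\alpha(t)\leq(\log fh)'(r(0))$ genuinely uses three separate facts — nonnegativity of $(\log fh)'$, the bound $\sin\alpha\leq 1$, and monotonicity of $(\log fh)'$ together with $r(t)\leq r(0)$ — and that both the hypothesis "$fh$ nondecreasing at $r(0)$" needed to invoke Lemma \ref{lem:alphaleqpi} and the inequality $\alpha'(0)\geq 0$ follow from spherical symmetrization once one observes $r(0)\geq r(t)\geq r_0$.
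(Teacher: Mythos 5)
Your proof is correct, and it is organized differently from the paper's. The paper argues by contradiction: it first splits off the case $\alpha'(0)=0$ via Lemma \ref{lem:gamma'0} (circle), then assumes $\alpha'(0)>0$, locates the first time $t_0$ at which $\alpha'$ vanishes, and derives a contradiction by comparing the generalized curvature at $t<t_0$ with that at $t_0$, using $\sin\alpha(t)>\sin\alpha(t_0)$ and $(\log fh)'(r(t))\geq(\log fh)'(r(t_0))$. You instead compare every $t$ directly with $t=0$: since $\sin\alpha(0)=1$ maximizes $\sin\alpha$ on $[\pi/2,\pi]$ and $(\log fh)'$ is nonnegative and nondecreasing on $[r_0,\infty)$ with $r(t)\leq r(0)$, the term $(\log fh)'(r(t))\sin\alpha(t)$ is bounded above by $(\log fh)'(r(0))\leq c$, giving $\alpha'(t)=c-(\log fh)'(r(t))\sin\alpha(t)\geq\alpha'(0)\geq 0$ at once. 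The ingredients (constancy of $\kappa_f$, Lemma \ref{lem:alphaleqpi}, Lemma \ref{lem:alphaproperty}, monotonicity of $r$ and of $(\log fh)'$) are the same, but your direct comparison eliminates both the case split on $\alpha'(0)$ and the appeal to uniqueness of ODE solutions, which is a modest but genuine simplification; the paper's version, by anchoring at a first zero of $\alpha'$, generalizes more readily to situations where nothing special is known about $t=0$. You also correctly note the vacuity/trivial case when $r(0)<r_0$, which the paper handles with its opening sentence.
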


\begin{proof}
Assume that $r(0)\geq r_0$, otherwise the statement is trivial.
By Lemma \ref{lem:alphaleqpi}, $\alpha(t)\in[\pi/2,\pi]$ for all $t\in[0,\beta]$.
By Lemma \ref{lem:alphaproperty}, $\alpha'(0)\geq 0$.
If $\alpha'(0)=0$, then Lemma \ref{lem:gamma'0} implies that $\gamma$
is a circle centered at the origin, and the lemma holds.
So suppose $\alpha'(0)>0$.
Assume for contradiction that there is a $t$ for which $r(t)\geq r_0$ and $\alpha'(t)< 0$.
Let $t_0>0$ be the smallest value of $t$ such that
such that $r(t)\geq r_0$ and $\alpha'(t)=0$.
For $t<t_0$, the generalized curvature
formula (Prop. \ref{prop:gencurvatureformula}) gives
$$(\log fh)'(r(t))\sin\alpha(t)+\alpha'(t)=(\log fh)'(r(t_0))\sin\alpha(t_0).$$
Because $\alpha'(t)>0$, it must be that
\begin{equation}
\label{eq:alphaincreasebd}
(\log fh)'(r(t))\sin\alpha(t)<(\log fh)'(r(t_0))\sin\alpha(t_0).
\end{equation}
Note that $\pi/2\leq\alpha(t)<\alpha(t_0)\leq\pi$ by construction,
so $\sin\alpha(t)>\sin\alpha(t_0)\geq 0$.
Moreover, because $r(t)\geq r(t_0)\geq r_0$, by hypothesis,
$$(\log fh)'(r(t))\geq(\log fh)'(r(t_0))\geq 0.$$
So the left-hand side of \eqref{eq:alphaincreasebd} is greater than or equal
to its right-hand side, a contradiction.
Therefore the lemma holds.
\end{proof}

\begin{lemma}
\label{lem:alphaaccel}
If $fh$ is nondecreasing and log-convex on the interval $[r_0,\infty)$
and the origin is interior to $\gamma$, then for any $t\in[0,\beta]$ such that
$r(t)\geq r_0$, $\alpha''(t)\geq 0$.
\end{lemma}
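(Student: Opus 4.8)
The plan is to differentiate the generalized curvature formula from Proposition \ref{prop:gencurvatureformula} and use the sign information we already have on $\alpha$ and $\alpha'$ together with the log-convexity and monotonicity of $fh$. Write $L = \log fh$ and recall that along $\gamma$ we have the constant $c = L'(r(t))\sin\alpha(t) + \alpha'(t)$. Differentiating in $t$ and using $r'(t) = \cos\alpha(t)$ (equation \eqref{eqn:rthetaalpha}) gives
\begin{equation*}
\alpha''(t) = -\,L''(r(t))\cos\alpha(t)\sin\alpha(t) - L'(r(t))\cos\alpha(t)\,\alpha'(t).
\end{equation*}
So the task reduces to showing the right-hand side is nonnegative whenever $r(t)\ge r_0$.

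The key sign facts are all available from earlier results. By Lemma \ref{lem:alphaleqpi}, since the origin is interior to $\gamma$ and $fh$ is nondecreasing on $[r_0,\infty)$ (hence in particular at $r(0)\ge r(t)\ge r_0$), we have $\alpha(t)\in[\pi/2,\pi]$; thus $\sin\alpha(t)\ge 0$ and $\cos\alpha(t)\le 0$. By Lemma \ref{lem:alphaincrease}, $\alpha'(t)\ge 0$ on the set where $r(t)\ge r_0$. Log-convexity of $fh$ on $[r_0,\infty)$ gives $L''(r(t))\ge 0$, and monotonicity gives $L'(r(t))\ge 0$. Now read off the two terms: $-L''(r(t))\cos\alpha(t)\sin\alpha(t)$ is $-(\ge 0)(\le 0)(\ge 0) \ge 0$, and $-L'(r(t))\cos\alpha(t)\,\alpha'(t)$ is $-(\ge 0)(\le 0)(\ge 0)\ge 0$. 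Hence $\alpha''(t)\ge 0$.

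One should handle at the outset the degenerate possibility that $\gamma$ is a circle centered at the origin: then $\alpha(t)\equiv \pi/2$, so $\alpha''(t)=0$ and the conclusion holds trivially; otherwise the quoted lemmas apply as stated. I would also note, as in the proof of Lemma \ref{lem:alphaincrease}, that if $r(0)<r_0$ the hypothesis $r(t)\ge r_0$ is never met and there is nothing to prove, so we may assume $r(0)\ge r_0$ and therefore $r(t)\ge r(t_*)\ge r_0$ at the relevant points by spherical symmetrization (the distance $r$ is non-increasing as $t$ moves away from $0$).

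The only real subtlety—the part worth stating carefully rather than the sign-chase, which is routine—is that $fh$ being only $C^1$ a priori does not guarantee $L''$ exists pointwise; but convexity of $L$ on $[r_0,\infty)$ means $L'$ is nondecreasing there, so the computation should be phrased via a difference quotient or via the fact that a convex function has nonnegative distributional second derivative, yielding $\alpha'$ nondecreasing on $\{r(t)\ge r_0\}$ directly. In the smooth case the displayed identity for $\alpha''$ is exactly the statement. I expect no genuine obstacle beyond being careful that all three sign inputs ($\alpha\in[\pi/2,\pi]$, $\alpha'\ge 0$, and the two convexity/monotonicity facts) are invoked on the correct $r$-range, which is precisely where the hypothesis "$fh$ nondecreasing and log-convex on $[r_0,\infty)$" and "$r(t)\ge r_0$" are used.
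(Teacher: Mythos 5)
Your proof is correct and is essentially the paper's argument: both rest on differentiating (or equivalently, tracking the monotonicity of) the constant generalized curvature identity $\kappa_f=(\log fh)'(r)\sin\alpha+\alpha'$, using $\alpha\in[\pi/2,\pi]$ from Lemma \ref{lem:alphaleqpi}, $\alpha'\ge 0$ from Lemma \ref{lem:alphaincrease}, and the nonnegativity and monotonicity of $(\log fh)'$. The paper phrases it exactly as your closing remark suggests --- the product $(\log fh)'(r(t))\sin\alpha(t)$ is a product of two nonnegative nonincreasing functions of $t$, hence nonincreasing, so $\alpha'$ is nondecreasing --- which sidesteps the pointwise existence of $(\log fh)''$ that you correctly flag.
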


\begin{proof}
Fix $t\in[0,\beta]$ such that $r(t)\geq r_0$.
By Lemma \ref{lem:alphaincrease}, $\alpha'(t)\geq 0$, so $\alpha$ is nondecreasing.
By Lemma \ref{lem:alphaleqpi}, $\alpha(t)\in[\pi/2,\pi]$.
Recall the generalized curvature formula (Prop. \ref{prop:gencurvatureformula}):
$$\kappa_f(t)=(\log fh)'(r)\sin\alpha + \alpha'.$$
Because $(\log fh)'(r)$ is nonnegative and nonincreasing as a function of $t$ and
$\sin\alpha$ is nonnegative and nonincreasing, $\alpha'$ is nondecreasing.
Hence $\alpha''(t)\geq 0$.
\end{proof}

The next theorem proves the circle isoperimetric, replacing the hypothesis on close approach to the origin of Proposition \ref{prop:fheventuallylogconvex} with a lower bound $M$ on $(\log fh)'$ at the point farthest from the origin.

\begin{theorem}
\label{thm:gammacirclelargevol}
Consider $\R^2$ in polar coordinates $(r,\theta)$ with metric
$$ds^2=dr^2+h(r)^2d\theta^2$$
and radial density $f(r)$.
Suppose that $fh$ has positive derivatives and that,
on the interval $[r_0,\infty)$, it is log-convex.
Let
$$M=\inf_{r>r_0} \bracket{(\log fh)'(r)+\frac{\pi}{2(r-r_0)}}.$$
Suppose that the origin is interior to the component of an isoperimetric curve
farthest from the origin and
the farthest distance from the origin $r_\textnormal{max}$ satisfies
$$r_\textnormal{max}> r_0,\quad (\log fh)'(r_\textnormal{max})>M.$$
Then the isoperimetric curve is a circle centered at the origin.
\end{theorem}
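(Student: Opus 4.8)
The plan is a proof by contradiction: assume the farthest component $\gamma$ of the isoperimetric curve is not a circle centered at the origin, and derive a contradiction. Once that is done, $\gamma$ must be such a circle, and Lemma \ref{lem:onecomponent} (applicable since $fh$ has positive derivatives) upgrades this to the statement that the whole isoperimetric curve is that circle. For the setup, note that positive derivatives make $fh$ increasing, hence nondecreasing on $[r_0,\infty)$, so with log-convexity on $[r_0,\infty)$ the function $(\log fh)'$ is positive and nondecreasing there. This activates Lemmas \ref{lem:alphaleqpi}, \ref{lem:alphaincrease}, and \ref{lem:alphaaccel}: $\alpha(t)\in[\pi/2,\pi]$ for all $t\in[0,\beta]$, and at every $t$ with $r(t)\ge r_0$ we have $\alpha'(t)\ge 0$ and $\alpha''(t)\ge 0$. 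Also $r'=\cos\alpha\le 0$ by spherical symmetrization, so $r$ is nonincreasing with $r(0)=r_{\max}$; and since $\gamma$ is not a circle centered at the origin, Proposition \ref{prop:fheventuallylogconvex} forces $r(\beta)<r_0$. Hence, by continuity, there is a $t^*\in(0,\beta)$ with $r(t^*)=r_0$ and $r(t)\ge r_0$ for $t\in[0,t^*]$.

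Next I would extract a distinguished radius from the quantitative hypothesis $(\log fh)'(r_{\max})>M$. Since $M$ is an infimum, there is an $r_1>r_0$ with $(\log fh)'(r_{\max})>(\log fh)'(r_1)+\tfrac{\pi}{2(r_1-r_0)}$; because $(\log fh)'$ is nondecreasing on $[r_0,\infty)$ this is impossible when $r_1\ge r_{\max}$, so in fact $r_0<r_1<r_{\max}$, and there is $t_1\in(0,t^*)$ with $r(t_1)=r_1$. Now use that the generalized curvature $\kappa_f=(\log fh)'(r)\sin\alpha+\alpha'$ is constant (Propositions \ref{prop:gencurvatureformula} and \ref{prop:FirstVariation}). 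Evaluating at $t=0$, where $\alpha=\pi/2$ and $\alpha'(0)\ge 0$ (Lemma \ref{lem:alphaincrease} at $r(0)=r_{\max}\ge r_0$), gives $\kappa_f\ge(\log fh)'(r_{\max})$; evaluating at $t_1$ and using $0\le\sin\alpha(t_1)\le 1$ together with $(\log fh)'(r_1)>0$ gives $\alpha'(t_1)\ge(\log fh)'(r_{\max})-(\log fh)'(r_1)>\tfrac{\pi}{2(r_1-r_0)}$.

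To finish I would combine the acceleration lemma with an arclength estimate on $[t_1,t^*]$. There $r(t)\in[r_0,r_1]$, so Lemma \ref{lem:alphaaccel} makes $\alpha'$ nondecreasing on $[t_1,t^*]$, and hence $\alpha'(t)>\tfrac{\pi}{2(r_1-r_0)}$ throughout; meanwhile $\alpha(t)\in[\pi/2,\pi]$ there gives $-r'(t)=-\cos\alpha(t)\in[0,1]$, so $r_1-r_0=\int_{t_1}^{t^*}(-\cos\alpha)\,dt\le t^*-t_1$. Integrating $\alpha'$ over $[t_1,t^*]$ then yields $\alpha(t^*)-\alpha(t_1)>\tfrac{\pi}{2(r_1-r_0)}(t^*-t_1)\ge\tfrac{\pi}{2}$, and since $\alpha(t_1)\ge\pi/2$ (Lemma \ref{lem:alphaleqpi}), this forces $\alpha(t^*)>\pi$, contradicting Lemma \ref{lem:alphaleqpi}. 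That contradiction shows $\gamma$ is a circle centered at the origin, and the theorem follows.

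I expect the crux to be the bookkeeping that converts ``$\gamma$ must travel the inward radial distance $r_1-r_0$'' into ``$\alpha$ increases by more than $\pi/2$'': the constancy of the generalized curvature, anchored at the outermost point, supplies a lower bound for $\alpha'$ at the inner radius $r_1$; the convexity of $\alpha$ from Lemma \ref{lem:alphaaccel} propagates that lower bound across the whole inner arc; and the definition of $M$ is calibrated precisely so that $\tfrac{\pi}{2(r_1-r_0)}\cdot(r_1-r_0)=\tfrac{\pi}{2}$ drives $\alpha$ past $\pi$. The rest --- reading the ``nondecreasing'' hypotheses of the earlier lemmas out of ``positive derivatives'', and the intermediate-value choices of $t^*$ and $t_1$ --- should be routine.
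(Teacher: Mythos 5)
Your proposal is correct and follows essentially the same route as the paper: the same reduction via Lemma \ref{lem:onecomponent} and Proposition \ref{prop:fheventuallylogconvex}, the same choice of $r_1$ and $t_1$ from the definition of $M$, and the same three estimates (constancy of $\kappa_f$ anchored at $t=0$, convexity of $\alpha$ from Lemma \ref{lem:alphaaccel}, and the arclength bound $t^*-t_1\ge r_1-r_0$). The only cosmetic difference is the direction of the final contradiction --- you lower-bound $\alpha'(t_1)$ and show $\alpha$ overshoots $\pi$, while the paper upper-bounds $\alpha'(t_1)$ by the $\pi/2$ budget and contradicts the curvature equation --- which is a trivial rearrangement of the same inequalities.
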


\begin{proof}
The idea of the proof is that if the farthest component $\gamma$
goes near the origin, then it has
to travel a long distance to reach the region near the origin, and $\alpha$ would
have to increase too much. See Figure \ref{fig:expofig}. 

By Lemma \ref{lem:onecomponent}, it suffices to show that
$\gamma$ is a circle centered at the origin.
Suppose the contrary.
By Proposition \ref{prop:fheventuallylogconvex}, $r(\beta)< r_0$.
Since $r(0)>r_0$,  there is a $t_0$ such that $r(t_0)=r_0$.
By Lemmas \ref{lem:alphaincrease} and \ref{lem:alphaaccel},
$\alpha'(t)\geq 0$ and $\alpha''(t)\geq 0$ for all $t\in[0,t_0]$.
Since $(\log fh)'(r(0))>M$, there is an $r_1>r_0$ such that
$$(\log fh)'(r(0))>(\log fh)'(r_1)+\frac{\pi}{2(r_1-r_0)}.$$
Log-convexity of $fh$ implies that $r_1<r(0)$, so there is $t_1<t_0$ such that $r(t_1)=r_1$.
Because we are using arclength parametrization, it must be that
$t_0-t_1\geq r_1-r_0$.
Because $\alpha(t_1)\geq\pi/2$ and $\alpha(t_0)\leq\pi$ (Lemma \ref{lem:alphaleqpi}),
$$\frac{\pi}{2}\geq \alpha(t_0)-\alpha(t_1)=\int_{t_1}^{t_0} \alpha'\geq (t_0-t_1) \alpha'(t_1)
\geq (r_1-r_0)\alpha'(t_1),$$
so that
$$\alpha'(t_1)\leq \frac{\pi}{2(r_1-r_0)}.$$
By the constant generalized curvature formula (Prop. \ref{prop:gencurvatureformula}),
\begin{align*}
(\log fh)'(r(0))&\leq (\log fh)'(r(0))+\alpha'(0) \\
&=(\log fh)'(r_1)\sin\alpha(t_1)+\alpha'(t_1) \\
&\leq(\log fh)'(r_1)+\alpha'(t_1) \\
&\leq (\log fh)'(r_1)+\frac{\pi}{2(r_1-r_0)},
\end{align*}
a contradiction.
Therefore $\gamma$ is a circle centered at the origin.
By Lemma \ref{lem:onecomponent}, the whole isoperimetric curve
is that circle centered at the origin.
\end{proof}

The hypothesis of Theorem \ref{thm:gammacirclelargevol} can be satisfied
for large volumes whenever $fh$ is eventually log-convex and $(\log fh)'$ diverges to infinity,
as shown in the following corollary.

\begin{cor}
\label{cor:largevolume}
Suppose that $fh$ has positive derivatives and is eventually log-convex and
$(\log fh)'$ diverges to infinity.
Then, for large volumes, if the origin is interior to
the component farthest from the origin, an isoperimetric curve is a circle centered at the origin.
\end{cor}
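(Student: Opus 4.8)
The plan is to obtain the corollary directly from Theorem~\ref{thm:gammacirclelargevol}: all that is needed is to verify that the hypotheses of that theorem hold once the prescribed volume is large enough. Fix $r_0$ so that $fh$ is log-convex on $[r_0,\infty)$, and recall that $fh$ has positive derivatives, so $(\log fh)'>0$ everywhere. The number
$$M=\inf_{r>r_0}\bracket{(\log fh)'(r)+\frac{\pi}{2(r-r_0)}}$$
is then finite: it is nonnegative because both terms inside the bracket are positive, and it is at most the value of the bracket at any single $r>r_0$. So Theorem~\ref{thm:gammacirclelargevol} will apply as soon as the farthest distance $r_\textnormal{max}$ from the origin reached by an isoperimetric curve satisfies $r_\textnormal{max}>r_0$ and $(\log fh)'(r_\textnormal{max})>M$; since $(\log fh)'$ diverges to infinity, both inequalities hold whenever $r_\textnormal{max}$ exceeds some fixed threshold.

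It therefore remains to check that large enclosed volume forces $r_\textnormal{max}$ to be large. The key observation is that, after spherical symmetrization, the isoperimetric region $A$ lies inside $\overline{B(r_\textnormal{max})}$, because $\gamma$ is by construction its boundary component farthest from the origin (Section~\ref{sec:congencurve}); hence the prescribed volume satisfies $V=\abs{A}_f\le\abs{B(r_\textnormal{max})}_f$. For each finite $R$ the weighted volume $\abs{B(R)}_f=2\pi\int_0^R f(r)h(r)\,dr$ is finite, since $f$ and $h$ are continuous, and strictly smaller than the total volume $\abs{H}_f$, since $fh>0$ on $(R,\infty)$. Consequently, for every $R$, any $V$ with $\abs{B(R)}_f<V<\abs{H}_f$ forces $r_\textnormal{max}>R$; that is, $r_\textnormal{max}\to\infty$ as $V$ increases to $\abs{H}_f$ (or to $\infty$, if $\abs{H}_f=\infty$). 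Combining this with the previous paragraph, for all volumes sufficiently close to $\abs{H}_f$ we have $r_\textnormal{max}>r_0$ and $(\log fh)'(r_\textnormal{max})>M$, and Theorem~\ref{thm:gammacirclelargevol} gives the conclusion.

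The argument is short, and its only subtle point is the middle step, where we use that a symmetrized isoperimetric region of large volume must genuinely reach out to a large radius. This rests on two facts that must be invoked with the conventions of Section~\ref{sec:congencurve}: that $\abs{B(R)}_f$ is finite for each $R$ (so a region confined to a fixed ball cannot carry arbitrarily large volume), and that $\gamma$, being the outermost boundary component of the symmetrized region, encloses all of $A$. Neither is deep, but both are the places where one should be careful.
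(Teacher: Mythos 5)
Your proof is correct and follows essentially the same route as the paper: the paper's own proof is the one-liner ``apply Theorem~\ref{thm:gammacirclelargevol}; for large volumes $r(0)$ is large, so $(\log fh)'(r(0))>M$.'' You have simply made explicit the two details the paper leaves implicit --- that $M$ is finite and that large weighted volume forces $r_\textnormal{max}$ to be large because the symmetrized region sits inside $\overline{B(r_\textnormal{max})}$, whose weighted volume is finite --- and both verifications are sound.
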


\begin{proof}
Apply Theorem \ref{thm:gammacirclelargevol}.
For large volumes, $r(0)$ is large, so $(\log fh)'(r(0))>M$.
Hence $\gamma$ is a circle centered at the origin.
\end{proof}

\begin{remark}
\label{rem:otherresults}
Similar results to Corollary \ref{cor:largevolume} are proven by Kolesnikov and Zhdanov \cite{KZ} and Howe \cite{Ho}, without assuming that the component farthest from the origin of an isoperimetric region contains the origin.
Kolesnikov and Zhdanov use the divergence theorem to show
that isoperimetric surfaces in $\R^n$ for large volumes
are spheres about the origin \cite[Prop. 6.7]{KZ}.
Howe uses vertical area to prove that isoperimetric regions
in a warped product of an interval with a Riemannian manifold
for large volumes are vertical fibers \cite[Cor. 2.10]{Ho}.
\end{remark}

The following corollary applies Theorem \ref{thm:gammacirclelargevol}
to the example of the Borell density $e^{r^2}$ on the hyperbolic plane.

\begin{cor}
Consider the hyperbolic plane $\mb{H}^2$ with density
$e^{r^2}$. Let $r_0=\sinh^{-1}(1/\sqrt{2})$,
$M$ be as in Theorem \ref{thm:gammacirclelargevol}, $r^*>r_0$
be such that $(\log fh)'(r^*)=M$, and
$$V_0=2\pi(\cosh r^*-1) \approx 31.098.$$
Then for any volume larger than $V_0$,
if the origin is interior to the component farthest from the origin,
an isoperimetric curve is a circle centered at the origin.
\end{cor}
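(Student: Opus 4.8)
The plan is to check that $\mathbb{H}^2$ with density $e^{r^2}$ satisfies the hypotheses of Theorem~\ref{thm:gammacirclelargevol} with the stated $r_0$, and then to turn the volume bound $V>V_0$ into the lower bound on $r_\textnormal{max}$ demanded there. Since $h=\sinh r$ is nondecreasing, $g=f=e^{r^2}$ diverges, $gh$ is nondecreasing, $g^{2}/f=e^{r^2}$ is nondecreasing, and $\int_0^\infty e^{r^2/2}\,dr$ diverges, Theorems~\ref{thm:exist} and~\ref{thm:bounded} already guarantee that isoperimetric regions exist and are bounded, so there is an isoperimetric region to talk about.

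First I would identify the data. In geodesic polar coordinates $\mathbb{H}^2$ has $h(r)=\sinh r$, so with $f(r)=e^{r^2}$ one gets $\log(fh)(r)=r^2+\log\sinh r$, hence $(\log fh)'(r)=2r+\coth r$ and $(\log fh)''(r)=2-1/\sinh^2 r$. Because $2r+\coth r>0$ for all $r>0$, $fh$ (and $f$, $h$) has positive derivatives and $(\log fh)'$ diverges to $\infty$. The key elementary point is that $(\log fh)''(r)=2-1/\sinh^2 r\ge 0$ exactly when $\sinh^2 r\ge 1/2$, i.e.\ exactly when $r\ge r_0=\sinh^{-1}(1/\sqrt 2)$; thus $fh$ is log-convex precisely on $[r_0,\infty)$ and $(\log fh)'$ is strictly increasing there. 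This verifies the standing hypotheses of Theorem~\ref{thm:gammacirclelargevol}.

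Next I would pin down $M$ and $r^*$. The function $q(r):=(\log fh)'(r)+\frac{\pi}{2(r-r_0)}$ tends to $+\infty$ both as $r\to r_0^+$ and as $r\to\infty$, so it attains its infimum $M$ at an interior point; in particular $M$ is finite, and $M>(\log fh)'(r_0)$ since $q(r)>(\log fh)'(r)>(\log fh)'(r_0)$ for every $r>r_0$. As $(\log fh)'$ is a continuous strictly increasing bijection of $[r_0,\infty)$ onto $[(\log fh)'(r_0),\infty)$, there is a unique $r^*>r_0$ with $(\log fh)'(r^*)=M$, which is the $r^*$ appearing in the statement.

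Finally I would convert volume into distance. Let $A$ be an isoperimetric region with volume exceeding $V_0=2\pi(\cosh r^*-1)$ whose farthest boundary component $\gamma$ has the origin in its interior, and write $r_\textnormal{max}=r(0)$ for the largest distance from the origin to $\gamma$. Since $\gamma$ is the component of $\partial A$ farthest from the origin and $A$ is bounded, $A\subseteq\overline{B(r_\textnormal{max})}$: a point of $A$ lying beyond this ball would, moving radially outward, have to meet $\partial A$ at a point even farther from the origin than $\gamma$, a contradiction. Hence the (hyperbolic) area of $A$ is at most that of $\overline{B(r_\textnormal{max})}$, namely $2\pi(\cosh r_\textnormal{max}-1)$; since this exceeds $V_0=2\pi(\cosh r^*-1)$ and $\cosh$ is increasing, $r_\textnormal{max}>r^*$, whence $r_\textnormal{max}>r_0$ and $(\log fh)'(r_\textnormal{max})>(\log fh)'(r^*)=M$. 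All hypotheses of Theorem~\ref{thm:gammacirclelargevol} now hold, so the isoperimetric curve is a circle centered at the origin. The only genuinely delicate step is this last conversion: it rests on the inclusion $A\subseteq\overline{B(r_\textnormal{max})}$ (which is precisely what makes $r_\textnormal{max}$ the right quantity to control) together with the fact that the relevant volume of the ball $B(r^*)$ equals $2\pi(\cosh r^*-1)$, consistent with the stated value $V_0\approx 31.098$; everything else is a short computation with hyperbolic functions.
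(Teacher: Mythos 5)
Your proposal follows the paper's proof in structure --- verify that $fh=e^{r^2}\sinh r$ is nondecreasing and log-convex on $[r_0,\infty)$ with $r_0=\sinh^{-1}(1/\sqrt{2})$, then convert the volume hypothesis into $r_\textnormal{max}>r^*$ via the inclusion $A\subseteq\overline{B(r_\textnormal{max})}$ --- and it supplies details the paper's three-line proof omits: the computation $(\log fh)''(r)=2-1/\sinh^2 r$, which shows log-convexity begins exactly at $r_0$; the attainment and finiteness of the infimum defining $M$; and the existence and uniqueness of $r^*$. One caution, which applies equally to the paper's own proof: in the last step you pass from ``volume of $A$ exceeds $V_0$'' to ``hyperbolic area of $A$ exceeds $2\pi(\cosh r^*-1)$,'' but under the paper's stated convention ``volume'' means volume weighted by the density $e^{r^2}\geq 1$, so the weighted volume of $A$ exceeding $V_0$ does not imply that its unweighted area does. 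To force $r_\textnormal{max}>r^*$ from $A\subseteq\overline{B(r_\textnormal{max})}$ one actually needs the prescribed weighted volume to exceed the weighted volume $2\pi\int_0^{r^*}e^{t^2}\sinh t\,dt$ of $B(r^*)$, which is strictly larger than $V_0$. This imprecision is inherited from the corollary's statement and the paper's proof rather than introduced by you; apart from flagging it, your write-up is correct and is essentially the paper's argument carried out carefully.
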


\begin{proof}
The product $fh=e^{r^2}\sinh r$ is log-convex and nondecreasing on
$[r_0,\infty)$, so we can apply Theorem \ref{thm:gammacirclelargevol}.
Since $V_0$ is the area of the hyperbolic circle with radius $r^*$,
for any volume larger than $V_0$, $r(0)>r^*>r_0$, so that $(\log fh)'(r(0))>M$.
Therefore, by Theorem \ref{thm:gammacirclelargevol},
$\gamma$ is a circle centered at the origin.
\end{proof}

\medskip

\noindent MSC2010: 51F99

\medskip

\noindent Key words and phrases: isoperimetric, surfaces of revolution, density

\end{document}